\let\oldmarginpar\marginpar
\renewcommand\marginpar[1]{\-\oldmarginpar[\raggedleft\footnotesize #1]%
{\raggedright\footnotesize #1}}
\newtheorem{theorem}{Theorem}
\newtheorem{lemma}[theorem]{Lemma}
\newtheorem{proposition}[theorem]{Proposition}
\theoremstyle{remark}
\newtheorem*{remark}{Remark}
\newtheorem*{remarks}{Remarks}
\numberwithin{theorem}{section} \numberwithin{equation}{section}
\newcommand{\ord}{\text {\rm ord}}
\newcommand{\AM}{A_M}
\newcommand{\FF}{\mathcal{F}}
\newcommand{\HH}{\mathcal{H}}
\newcommand{\R}{\mathbb{R}}
\newcommand{\Q}{\mathbb{Q}}
\newcommand{\Z}{\mathbb{Z}}
\newcommand{\N}{\mathbb{N}}
\newcommand{\SL}{{\text {\rm SL}}}
\newcommand{\Qp}{\widehat{\overline{\Q}}_p}
\newcommand{\Op}{\mathcal{O}_K}
\begin{document}
\title[]{Mock modular forms as $p$-adic modular forms}

\author{Kathrin Bringmann} 
\address{Mathematical Institute\\University of
Cologne\\ Weyertal 86-90 \\ 50931 Cologne \\Germany}
\email{kbringma@math.uni-koeln.de}
\author{Pavel Guerzhoy}
\address{Department of Mathematics\\ University of Hawaii\\ Honolulu, HI 96822-2273}
\email{pavel@math.hawaii.edu}
\author{Ben Kane}
\address{Mathematical Institute\\University of
Cologne\\ Weyertal 86-90 \\ 50931 Cologne \\Germany}
\email{bkane@math.uni-koeln.de}
\date{\today}
\thanks{The first author was partially supported by NSF grant DMS-0757907 and by the Alfried Krupp prize.  The final details of the paper were completed at the AIM workshop ``Mock modular forms in combinatorics and arithmetic geometry.''  The authors would like to thank AIM for their support and for supplying a stimulating work environment.}
\subjclass[2000] {11F33, 11F37, 11F11}
\keywords{$p$-adic modular forms, mock theta functions, mock modular forms, harmonic weak Maass forms}

\begin{abstract}
In this paper, we consider the question of correcting mock modular forms in order to obtain $p$-adic modular forms. In certain cases we show that a mock modular form $M^+$ is a $p$-adic modular form.  Furthermore, we prove that otherwise the unique correction of $M^+$ is intimately related to the shadow of $M^+$.  
\end{abstract}

\maketitle

\section{Introduction and statement of results} 

In his last letter to Hardy (see \cite{Ramanujan}, pp. 127-131), Ramanujan introduced 17 examples of functions which he called \begin{it}mock theta functions\end{it}.  He stated that these forms had properties similar to those of theta functions, but were not modular forms.  A series of authors (cf. \cite{Andrews}, \cite{AndrewsHickerson}, \cite{Choi}, \cite{Hickerson}, and \cite{Watson}) investigated these functions extensively by means of complicated combinatorial arguments, proving many of Ramanujan's claims about identities between mock theta functions and modular transformation properties.  However, the source of these properties remained a mystery until Zwegers' thesis \cite{ZwegersThesis,Zwegers} related the mock theta functions to harmonic weak Maass forms.  Using the recent development of the theory of harmonic weak Maass forms \cite{BruinierFunke}, the first author and Ono obtained an exact formula for the coefficients of one of the mock theta functions \cite{BringmannOnoInvent}.  Furthermore, this investigation has lead to an infinite family of mock theta functions related to Dyson's rank statistic on partitions \cite{BringmannOnoAnnals}.  In particular, the first author and Ono determined the shadows of these forms, which allowed them to establish that certain coefficients are coefficients of \begin{it}weakly holomorphic modular forms\end{it}, i.e., those meromorphic modular forms whose possible poles lie only at the cusps, and to determine nice congruence properties.  This new perspective on mock theta functions has lead to positivity of rank differences \cite{Bringmann,BringmannKane}, a connection between the Hurwitz class numbers and overpartition rank differences \cite{BringmannLovejoy}, and a duality relating the coefficients of mock theta functions to coefficients of weakly holomorphic modular forms \cite{FolsomOno,ZwegersDual}, among a wide variety of other applications.

The emergence of the theory of harmonic weak Maass forms in explaining the properties of the mock theta functions has lead to further investigation of what are called mock modular forms.  More formally, let $k\geq 2$ and $N>0$ be integers, $\chi$ be a Dirichlet character, and $g\in S_k(N,\chi)$ be a (normalized) newform of weight $k$, level $N$, and Nebentypus $\chi$ whose coefficients lie in a number field $K_g$.  For a harmonic weak Maass form $M\in H_{2-k}(N,\chi)$ (see Section \ref{PreliminarySection} for the relevant definition), we define as usual the antiholomorphic differential operator $\xi_{2-k}:=2iy^{2-k} \overline{\frac{\partial}{\partial \overline{z}}}$.  We let $M^+$ be the holomorphic part of $M$ and $M^-$ denote its non-holomorphic part (see Section \ref{PreliminarySection}).  Following Zagier \cite{Zagier}, we refer to $M^+$ as a \begin{it}mock modular form\end{it} and $\xi_{2-k}(M)$ as the \begin{it}shadow\end{it} of the mock modular form $M^+$.  Throughout, we let $M\in H_{2-k}(N,\chi)$ be good for $g^c(z):=\overline{g(-\overline{z})}$ (for the definition of good, see \cite{BruinierOnoRhoades} or Section \ref{PreliminarySection}).  

Since mock theta functions and mock modular forms generally have transcendental coefficients, it should be quite surprising to find out that one may consider $p$-adic properties related to these coefficients.  We will prove in this paper that mock modular forms will combine with their shadows to give $p$-adic modular forms.  Serre \cite{Serre72} initiated the study of $p$-adic modular forms in order to investigate congruences between modular forms.  His construction allowed him to define a $p$-adic topology on modular forms so that two modular forms whose Fourier coefficients are congruent to high $p$-powers will be $p$-adically close and limits of increasingly $p$-adically closer sequences would exist.  Serre also used the theory of $p$-adic modular forms to give a $p$-adic interpolation of the values at negative integers of $L$-series of a totally real number field by using a convergent sequences of Eisenstein series and determining the constant term of the resulting $p$-adic modular form.  This groundwork has lead to a number of applications.  For example, Ahlgren and Boylan \cite{AhlgrenBoylan} obtained deep congruences for the partition function and other modular forms.
%  For further applications, see \cite{AhlgrenBoylan2007,Coleman,Laures,Wan}.

We now turn to the statement of our results.  For this, let $p$ be a prime and fix an algebraic closure $\overline{\Q}_p$ of $\Q_p$ along with an embedding $\iota: \ \overline{\Q} \hookrightarrow \overline{\Q}_p$.  We let $\Qp$ denote the $p$-adic closure of $\overline{\Q}_p$ and normalize the $p$-adic order so that $\ord_p(p)=1$.  We do not distinguish between algebraic numbers and their images under $\iota$.  In particular, for algebraic numbers $a,b \in \overline{\Q}$ we write $a \equiv b \pmod{p^m}$ if $\ord_p(\iota(a-b)) \geq m$.  For a formal power series $H(q)=\sum_{n\in \Z} a(n) q^n \in \Qp[[q,q^{-1}]]$, we write $H \equiv 0 \pmod {p^m}$ if $\displaystyle \sup_{n\in \Z} \left(\ord_p(a(n))\right) \geq m$.

In this paper, a \begin{it}$p$-adic modular form\end{it} of level $N$ and Nebentypus $\chi$ will refer to a formal power series $H(q)={\displaystyle \sum_{n\geq -t}} a(n) q^n$ with coefficients in $\Qp$ satisfying the following condition:  for every $m\in \N$ there exists a weakly holomorphic modular form ($q:=e^{2\pi i z}$ for $z\in \mathbb{H}$) $H_m(z)={\displaystyle \sum_{n\geq - t}} b_m(n) q^n\in M_{\ell_m}^!(N,\chi)$ (the space of weakly holomorphic modular forms of weight $\ell_m$, level $N$, and Nebentypus $\chi$), with algebraic coefficients $b_m(n)\in \overline{\Q}$, which satisfies the congruence
\begin{equation}
H\equiv H_m\pmod{p^m}.
\end{equation}
If $\ell_m=\ell$ is constant, then we will refer to $H$ as a $p$-adic modular form of weight $\ell$, level $N$, and Nebentypus $\chi$.

\begin{remark}
One can relate $p$-adic modular forms in our sense to those in the sense of Serre.  For this, let $\Delta$ be the unique normalized newform of weight $12$ for $\SL_2(\Z)$.  Consider the case of trivial Nebentypus and level $N=p^s$ ($s\in \N_0$).  After multiplication by $\Delta^t$, where $t$ is the appropriate power as given in the expansion of $H(q)$ above, every $p$-adic modular form in our sense becomes a $p$-adic modular form of weight $\ell$ in the sense of Serre due to Theorem 5.4 of \cite{Serre}.  
\end{remark}

In this paper, we will determine the unique correction (up to addition by a $p$-adic modular form) needed to complete $M^+$ to a $p$-adic modular form.  We will prove that when $M^+$ is not itself a $p$-adic modular form, this series has an intimate relationship with the shadow of $M^+$.  To determine the correction needed, we will first establish some notation.  We define the \begin{it}Eichler integral\end{it} of $g$ by
\begin{equation}\label{Egdefeqn}
E_g(z):=\sum_{n\geq 1} n^{1-k} a_g(n) q^n,
\end{equation}
where $a_g(n)$ denotes the $n$-th coefficient of $g$.  One easily sees that $D^{k-1}(E_g)=g$, where $D:=q\frac{\partial}{\partial q}$.  In Theorem 1.1 of \cite{PavelKentOno}, the second author, Kent, and Ono have shown that
$$
\FF_{a_M(1)}:=M^+ - a_M(1) E_g
$$
has coefficients in $K_g$, where we have abused notation to denote the $n$-th coefficient of $M^+$ by $a_M(n)$.  The embedding $\iota$ allows us to consider $\FF_{a_M(1)}$ as an element of $\Qp[[q,q^{-1}]]$.  Hence for every $\gamma\in \Qp$, 
\begin{equation}\label{FFadefeqn}
\FF_{\alpha}:=M^+ - \alpha E_g:= \FF_{a_M(1)}- \gamma E_g
\end{equation}
is an element of $\Qp[[q,q^{-1}]]$.  Here $\alpha:=a_M(1)+\gamma\in \AM$, where
$$
\AM:=a_M(1)+\Qp:=\left\{ a_M(1)+x:x\in \Qp\right\}
$$
denotes a set of formal sums.

As an example of the algebraicity of the coefficients of $\FF_{\alpha}$, consider the case where $g=\Delta$ and $M^+$ is the mock modular form associated to $\Delta$ with principal part $q^{-1}$, so that $11!M^+$ is approximately given by
$$
11!q^{-1} -\frac{2615348736000}{691} -73562460235.683647469 q  -929026615019.113082 q^2 +\cdots.
$$
Then a computer calculation indicates that
$$
11! \FF_{a_M(1)} = 11!q^{-1} -\frac{2615348736000}{691} -929888675100 q^2 - \frac{80840909811200}{9} q^3 - \cdots.
$$
This numerically demonstrates the above statement that all of the coefficients of $\FF_{a_M(1)}$ are rational, as $K_{\Delta}=\Q$.

The fact that the function $\FF_{\alpha}$ defined in \eqref{FFadefeqn} has coefficients in $\Qp$ now permits one to consider its $p$-adic properties.  We begin with the case that $p\nmid N$.  Let $\beta,\beta'$ be the roots of the polynomial
$$
x^2-a_g(p) x +\chi(p) p^{k-1} = (x-\beta)(x-\beta'),
$$
ordered so that $\ord_p(\beta)\leq \ord_p(\beta')$.  We first treat the ``generic'' case when $\ord_p(\beta)< \ord_p(\beta')$.  For this, we consider
\begin{equation}\label{FF*adefeqn}
\FF_{\alpha}^*:=\FF_{\alpha}-p^{1-k}\beta' \FF_{\alpha}|V(p)=\left(M^+-\alpha E_g\right)|\left( 1- p^{1-k}\beta' V(p)\right),
\end{equation}
where $V(p)$ is the usual $V$-operator.  

For instance, in the above example where $g=\Delta$, denote the coefficients of $11!\FF_{a_M(1)}$ (resp. $11!\FF_{a_M(1)}^*$) by $c(n)\in \Q$ (resp. $c^*(n)$).  Writing these coefficients $3$-adically, a computer calculation indicates that
\begin{eqnarray*}
c^*(3)=c(3)&= &3^{-2}+3^{-1}+2+\cdots,\\
c\left(3^6\right)&=& 3^{-47} + 3^{-46} + 2 \left(3^{-45}\right) + \cdots,\\
c^*\left(3^6\right)&=& 3^{-47} + 3^{-46}+2\left(3^{-45}\right) + \cdots, \\
c\left(3^7\right)&=& 3^{-56} + 3^{-55} + 2\left(3^{-54}\right) + \cdots,\\
c^*\left(3^7\right)&=& 3^{-56}+ 3^{-55} + 2\left(3^{-54}\right) + \cdots. 
\end{eqnarray*}
Having dispatched the difficulty arising from transcendental coefficients, our goal of realizing $M^+$ as a $3$-adic modular form now seems to be thwarted by the high powers of $3$ appearing in the denominators.  However, choosing $\alpha=a_M(1)+\gamma/11!$ with 
$$
\gamma:=154300462955809413372268553898=3^7+3^8+\cdots
$$
and denoting the coefficients of $11!\FF_{\alpha}$ (resp. $11!\FF_{\alpha}^*$) by $c_{\alpha}(n)$ (resp. $c_{\alpha}^*(n)$), one sees that 
\begin{eqnarray*}
c_{\alpha}^*(3)=c_{\alpha}(3)&= &2\left(3^5\right) + 3^7 + 3^8 + \cdots,\\
c_{\alpha}\left(3^6\right)&=& 2\left(3^{-5}\right) + 2\left(3^{-3}\right) + 3^{-2}+\cdots,\\
c_{\alpha}^*\left(3^6\right)&=&3^5+3^6+3^8 + \cdots,\\
c_{\alpha}\left(3^7\right)&=& 2\left(3^{-7}\right) + 2\left(3^{-5}\right) + 2\left(3^{-4}\right) + \cdots,\\
c_{\alpha}^*\left(3^7\right)&=& 2+2(3) + 2(3^2) + \cdots.
\end{eqnarray*}
Hence the denominators of the coefficients of $\FF_{\alpha}^*$ seem to have vanished, while the denominators of $\FF_{\alpha}$ still seem to grow.  

In order to cancel the denominators of $\FF_{\alpha}$ itself, we will require a further refinement of our corrected series.  For this purpose, for $\delta\in \Qp$, we define
\begin{equation}\label{FFaddefeqn}
\FF_{\alpha,\delta}:=\FF_{\alpha}-\delta\left(E_{g}-\beta E_{g|V(p)}\right).
\end{equation}
Denoting the coefficients of $11!\FF_{\alpha,\delta}$ by $c_{\alpha,\delta}(n)$, we see in our above example (with $\alpha$ chosen as above and $\delta$ chosen to be $23974292034=2\left(3^7\right)+2\left(3^9\right) + 3^{10}+\cdots$), that 
\begin{eqnarray*}
c_{\alpha,\delta}\left(3^6\right)&=& 3^5+3^6+3^8 + \cdots,\\
c_{\alpha,\delta}\left(3^7\right)&=& 2+2(3) + 2(3^2) + \cdots.
\end{eqnarray*}
The phenomenon we have observed here is evidenced by the fact that the correct choice of $\alpha$ and $\delta$ will result in a $p$-adic modular form. 
 The choice of $\alpha$ will be given in terms of the coefficients of $D^{k-1}\left(\FF_{a_{M}(1)}\right)$ and $D^{k-1}\left(\FF_{a_{M}(1)}^*\right)$.  For ease of notation, we define $b_M(n),b_M^*(n)\in \Qp$ by 
\begin{eqnarray}
\label{bMdefeqn}
F_{a_M(1)}:=D^{k-1}\left(\FF_{a_M(1)}\right)&=& \sum_{n\gg -\infty} b_M(n) q^n,\\
\label{bM*defeqn}
F_{a_M(1)}^*:=D^{k-1}\left(\FF_{a_M(1)}^*\right)&=& \sum_{n\gg -\infty} b_M^*(n) q^n.
\end{eqnarray}
\begin{theorem}\label{genericthm}
\noindent
\begin{enumerate}
\item[] \hspace{-36pt}\textnormal{(1)}\hspace{21pt}
There exists exactly one 
$\alpha\in \AM$ such that $\FF_{\alpha}^*$ is a $p$-adic modular form of weight $2-k$, level $pN$, and Nebentypus $\chi$.  The unique choice of $\alpha$ is given by the $p$-adic limit
\begin{equation}\label{alpha*evalgeneqn}
\alpha = a_M(1) + \lim_{m\to\infty} \frac{b_M^*\left(p^{m}\right)}{\beta^{m}}.
\end{equation}
\item[] \hspace{-36pt}\textnormal{(2)}\hspace{21pt}
If $\ord_p(\beta)<\ord_p(\beta')\neq (k-1)$, then there exists exactly one pair $(\alpha,\delta)\in \AM \times \Qp$ such that $\FF_{\alpha,\delta}$ is a $p$-adic modular form of weight $2-k$, level $pN$, and Nebentypus $\chi$.   Moreover, $\alpha$ is the unique choice from part (1) and 
\begin{equation}\label{deltaevaleqn}
\delta = \lim_{m\to\infty} \frac{a_{\FF_{\alpha}}\left(p^m\right)p^{m(k-1)}}{{\beta'}^{m}}.
\end{equation}
\end{enumerate}
\end{theorem}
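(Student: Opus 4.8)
The plan is to transport the problem to weight $k$ via $D^{k-1}$, use the ``good for $g^c$'' hypothesis to produce a Hecke recursion for $\FF_\alpha^*$, and then unwind that recursion into a $p$-adic limit of weakly holomorphic modular forms of weight $2-k$, with $\alpha$ (and in part (2) also $\delta$) forced by the vanishing of a one-dimensional obstruction. First I would record that $\mathfrak m:=D^{k-1}(M^+)$ is a weakly holomorphic modular form of weight $k$, level $N$, Nebentypus $\chi$: by Bol's identity $D^{k-1}(M)$ transforms with weight $k$, while harmonicity of $M$ forces $\partial M/\partial\overline z$ to be a constant multiple of $y^{k-2}$ times a function killed by $\partial/\partial z$, which $\bigl(\partial/\partial z\bigr)^{k-1}$ annihilates since $y^{k-2}$ is a polynomial of degree $k-2$ in $z$; hence $D^{k-1}(M)=D^{k-1}(M^+)$ is holomorphic. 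Writing $g_\beta:=g-\beta'g|V(p)$ and $g_{\beta'}:=g-\beta g|V(p)$ for the $p$-stabilizations of $g$, which are $U(p)$-eigenforms of eigenvalues $\beta,\beta'$ with $[q^{p^m}]g_\beta=\beta^m$, it follows that $F_{a_M(1)}=\mathfrak m-a_M(1)g$, that $F_{a_M(1)}^*=\mathfrak m|\!\left(1-\beta'V(p)\right)-a_M(1)g_\beta$, and that, for $\alpha=a_M(1)+\gamma$, $F_\alpha^*:=D^{k-1}(\FF_\alpha^*)=F_{a_M(1)}^*-\gamma g_\beta$ are all weakly holomorphic modular forms of weight $k$; the coefficients of $F_{a_M(1)}^*$ are algebraic by \cite{PavelKentOno}.

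The hypothesis enters through the Hecke-equivariance $\xi_{2-k}\!\left(M|T(p)\right)=p^{1-k}\,\xi_{2-k}(M)|T(p)$: since the shadow of $M$ is a multiple of the Hecke eigenform $g^c$, the form $\mathfrak G:=M|T(p)-p^{1-k}a_g(p)M$ has vanishing shadow and so lies in $M_{2-k}^!(N,\chi)$, with coefficients in $K_g$ and bounded $p$-adic denominators. A short computation using $\FF_\alpha^*=\FF_\alpha|\!\left(1-p^{1-k}\beta'V(p)\right)$, $U(p)V(p)=\mathrm{id}$, $\beta+\beta'=a_g(p)$ and $\beta\beta'=\chi(p)p^{k-1}$ then yields $\FF_\alpha^*|U(p)=p^{1-k}\beta\,\FF_\alpha^*+\mathfrak G$, whence for every $M\geq 1$
\[
\FF_\alpha^*\;=\;-\sum_{i=1}^{M}\bigl(p^{1-k}\beta\bigr)^{-i}\,\mathfrak G|U(p)^{\,i-1}\;+\;\bigl(p^{1-k}\beta\bigr)^{-M}\FF_\alpha^*|U(p)^{M}.
\]
The finite sum $T_M:=\sum_{i=1}^{M}\bigl(p^{1-k}\beta\bigr)^{-i}\mathfrak G|U(p)^{i-1}$ is a weakly holomorphic modular form of weight \emph{exactly} $2-k$, level $pN$, Nebentypus $\chi$, with algebraic coefficients, so $\FF_\alpha^*$ is a $p$-adic modular form of the asserted type exactly when the error term $\bigl(p^{1-k}\beta\bigr)^{-M}\FF_\alpha^*|U(p)^{M}$ tends $p$-adically to $0$ with a rate uniform in the exponent of $q$.

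The heart of the proof — and the step I expect to be the main obstacle — is this convergence analysis. Passing back to weight $k$, the sequences $a_M^{(n)}:=[q^{np^M}]\mathfrak m$ satisfy a second-order linear recursion with characteristic roots $\beta,\beta'$ and inhomogeneous term $[q^{np^M}]\mathfrak G_k$, where $\mathfrak G_k:=p^{k-1}D^{k-1}(\mathfrak G)$; crucially $[q^{np^M}]\mathfrak G_k=p^{k-1}(np^M)^{k-1}[q^{np^M}]\mathfrak G$ has $p$-adic valuation $\geq(M+1)(k-1)-O(1)$ uniformly in $n$, since the bounded denominators of $\mathfrak G$ are amplified by $D^{k-1}$. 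Because $\ord_p(\beta)\leq\frac{k-1}{2}<k-1$, this inhomogeneous contribution divided by $\beta^M$ tends to $0$ uniformly in $n$, and because $\ord_p(\beta)<\ord_p(\beta')$ the $\beta'^M$-component drops out; hence $\beta^{-M}F_\alpha^*|U(p)^M$ converges coefficientwise to a $q$-series $F_\infty$ with $F_\infty|U(p)=\beta F_\infty$. Since the $\beta$-eigenspace of $U(p)$ on weight-$k$ weakly holomorphic forms of bounded pole order is one-dimensional (again using $\ord_p(\beta)<\ord_p(\beta')$), $F_\infty=c\,g_\beta$, and as $\bigl(p^{1-k}\beta\bigr)^{-M}\FF_\alpha^*|U(p)^M=D^{1-k}\bigl(\beta^{-M}F_\alpha^*|U(p)^M\bigr)\to D^{1-k}(c\,g_\beta)$, the error vanishes in the limit precisely when $c=0$. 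Reading $c$ off from the coefficient of $q$ and using $[q^{p^m}]g_\beta=\beta^m$ gives $c=\bigl(\lim_m b_M^*(p^m)/\beta^m\bigr)-\gamma$, so the limit in \eqref{alpha*evalgeneqn} exists and $c=0$ selects exactly the $\alpha\in\AM$ of \eqref{alpha*evalgeneqn}; for any other $\alpha$ the coefficients of $q^{p^m}$ in $\FF_\alpha^*$ are not $p$-adically approximable by coefficients of weakly holomorphic modular forms, which gives uniqueness.

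For part (2) I would run the parallel argument with the other root. The identical computation gives $\FF_\alpha^{**}|U(p)=p^{1-k}\beta'\FF_\alpha^{**}+\mathfrak G$ for $\FF_\alpha^{**}:=\FF_\alpha|\!\left(1-p^{1-k}\beta V(p)\right)$, and $E_g-\beta E_{g|V(p)}$ is exactly the $U(p)$-eigenform of eigenvalue $p^{1-k}\beta'$ whose image under $D^{k-1}$ is $g_{\beta'}$. The hypothesis $\ord_p(\beta')\neq k-1$ forces $\ord_p\bigl(p^{1-k}\beta'\bigr)<0$, so the $\beta'$-recursion unwinds with the same shape as the $\beta$-recursion in part (1): it identifies $\alpha$ — the only value for which the relevant limit converges — with the value from part (1), and leaves a one-dimensional obstruction along the line spanned by $E_g-\beta E_{g|V(p)}$, annihilated by a unique $\delta\in\Qp$. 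Reading this $\delta$ off from the coefficient of $q^{p^m}$ in $\FF_\alpha$, which now grows like $\bigl(p^{1-k}\beta'\bigr)^m$, yields \eqref{deltaevaleqn}; the existence of the $\delta$-limit again rests on $\ord_p(\beta')<k-1$ (keeping the analogue of $\mathfrak G_k$'s contribution negligible after division by $\beta'^m$) together with the cancellation of an a priori divergent $(\beta/\beta')^m$-term, for which the part-(1) value of $\alpha$ is precisely what is needed. Making these convergence statements precise, and verifying that the obstruction is genuinely one-dimensional, is the crux throughout.
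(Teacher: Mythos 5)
Your overall architecture coincides with the paper's: the relation $M^+|T(p)-a_g(p)p^{1-k}M^+=R_p\in M_{2-k}^!(N,\chi)$ yields the first-order recursion $\FF_{\alpha}^*|U(p)=\beta p^{1-k}\FF_{\alpha}^*+R_p$, which you unwind into a $p$-adically convergent series of weakly holomorphic forms plus the error term $\bigl(\beta p^{1-k}\bigr)^{-M}\FF_{\alpha}^*|U(p)^M$, reducing everything to the vanishing of the limiting error. That much is correct and is exactly the content of the paper's operator $B(p)$ and equation \eqref{H*limeqn}. The genuine gap is your identification $F_\infty=c\,g_\beta$ of the limiting obstruction. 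You justify it by asserting that the $\beta$-eigenspace of $U(p)$ ``on weight-$k$ weakly holomorphic forms of bounded pole order'' is one-dimensional. This does not work: $F_\infty$ is only a formal $q$-series in $\Qp[[q]]$ produced as a $p$-adic limit, not a classical weakly holomorphic form, and in any case $F_\infty|U(p)=\beta F_\infty$ only says $a_{F_\infty}(np)=\beta\,a_{F_\infty}(n)$ — it imposes no relation among the coefficients $a_{F_\infty}(n)$ for distinct $n$ prime to $p$, so the eigenspace of such series is infinite-dimensional. Since varying $\gamma$ moves $F_\infty$ only along the line $\Qp\cdot(g-\beta'g|V(p))$, without first proving $F_\infty$ lies on that line you cannot conclude that \emph{any} $\alpha$ kills the obstruction; the existence half of part (1) collapses. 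The missing ingredient is the full prime-to-$Np$ Hecke action: from \eqref{Heckeeqn} one gets $F_\infty|_kT(n)=a_g(n)F_\infty$ for all $(n,Np)=1$, because the contributions of $r_n=n^{k-1}D^{k-1}(R_n)$ have bounded denominators and die after division by $\beta^m$; only the combination of these eigenequations with the $U(p)$-eigenproperty and the vanishing constant term determines $F_\infty$ recursively from $a_{F_\infty}(1)$ and gives $F_\infty=a_{F_\infty}(1)\left(g-\beta'g|V(p)\right)$, which is how the paper arrives at \eqref{H*evaleqn}. The same repair is needed in part (2) to identify the obstruction with a multiple of $g-\beta g|V(p)$.

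A secondary issue in part (2): your recursion is for $\FF_{\alpha}|\left(1-p^{1-k}\beta V(p)\right)$, but the associated limit $\lim_{m}{\beta'}^{-m}\,D^{k-1}\!\left(\FF_{\alpha}|\left(1-p^{1-k}\beta V(p)\right)\right)|U\left(p^m\right)$ converges for \emph{every} $\alpha$ once $\ord_p(\beta')<k-1$, since the stabilization already cancels the $\beta$-component; it is the limit attached to $\FF_{\alpha,\delta}$ itself (the object the theorem is actually about) that diverges unless $\alpha$ is the value from part (1). Your closing sentences show you see where the divergent $(\beta/\beta')^m$ term lives, but the step passing from the stabilized series back to $\FF_{\alpha,\delta}$ — the analogue of the paper's equation \eqref{HH*evaleqn} — and the resulting two-stage argument (first $\alpha=\alpha_1$ is forced by mere existence of the limit, then $\delta$ is determined by its vanishing) need to be written out; as it stands the logical route from your $\FF_{\alpha}^{**}$ statement to the assertion about $\FF_{\alpha,\delta}$ is not there.
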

\begin{remarks}
\noindent
\begin{enumerate}
\item[]\hspace{-36pt}(1)\hspace{21pt}Whenever $\ord_p(\beta)<\ord_p(\beta')$, we will furthermore show that equation \eqref{alpha*evalgeneqn} may be rewritten  as
%\item[] \hspace{-17pt}
%\end{enumerate}
\begin{equation}\label{alphaevalgeneqn}
\alpha = a_M(1) + \left(\beta-\beta'\right) \lim_{m\to\infty} \frac{b_M\left(p^{m}\right)}{\beta^{m+1}}.
\end{equation}
%\begin{enumerate}
\item[] \hspace{-36pt}(2)\hspace{21pt}One predicts that $\alpha$ is transcendental whenever $g$ does not have CM, as conjectured by the second author, Kent, and Ono \cite{PavelKentOno}, and hence the dependence on $a_M(1)$ in equation \eqref{alphaevalgeneqn} is likely unavoidable in the general case.
\item[] \hspace{-36pt}(3)\hspace{21pt}The limit occurring in equation \eqref{alphaevalgeneqn} also occurs (up to multiplication by a constant in $K_g(\beta)$) as the first coefficient of a limit in Proposition 2.2 of \cite{PavelKentOno}.  In \cite{PavelKentOno}, the authors conclude that this series has coefficients in $\overline{\Q}_p$.  However, due to the fact that the algebraic closure $\overline{\Q}_p$ of $\Q_p$ is not $p$-adically closed, one cannot actually conclude that this limit is an element of $\overline{\Q}_p$, but only necessarily an element of the larger field $\Qp$.
\item[]\hspace{-36pt}(4)\hspace{21pt}As Zagier observed in \cite{Zagier}, Eichler integrals are mock modular forms when one extends the definition to include shadows which are weakly holomorphic modular forms.  Hence our $p$-adic modular forms $\FF_{\alpha}^*$ and $\FF_{\alpha,\delta}$ transform like mock modular forms.
\end{enumerate}
\end{remarks}
Consider again the example of the mock modular form $M^+$ associated to $g=\Delta$ with principal part $q^{-1}$ which we have computationally investigated above.  Since $g$ has trivial level and the principal part is $q^{-1}$, taking the unique choice of $\alpha\in \AM$ and $\delta\in \Qp$ given by Theorem \ref{genericthm} (2), $\Delta\FF_{\alpha,\delta}$ will be a $3$-adic modular form in the sense of Serre.  Quite pleasantly, computational evidence indicates that
\begin{eqnarray*}
\Delta\FF_{\alpha,\delta} &\equiv& 1\pmod{3},\\
\Delta\FF_{\alpha,\delta} &\equiv& E_2\pmod{3^2},\\
\Delta\FF_{\alpha,\delta} &\equiv& E_2+9\Delta\pmod{3^3},\\
&\vdots&
\end{eqnarray*}
where $E_2(z):=1-24\sum_{n=1}^{\infty} \sigma_1(n) q^n,$ with $\sigma_r(n):=\sum_{d\mid n} d^r.$

Theorem \ref{genericthm} gives a strong interplay between $g$ and the weight $2-k$ correction of 
$$
\FF_0^*=M^{+}-p^{1-k}\beta'M^{+}|V(p)
$$
in the case when $\alpha\neq 0$.  It is of course of particular interest to consider the case when $\alpha=0$, so that $\FF_0^*$ is itself a $p$-adic modular form.  This leads us immediately to consider the case when $g$ has CM by an imaginary quadratic field $K$.
\begin{theorem}\label{splitthm}
Assume that $g$ has CM by $K$, $p\nmid N$, and $p$ is split in $\Op$.  Then $\alpha=0$ is the unique choice from Theorem \ref{genericthm}, so that $\FF_0^*$ is a $p$-adic modular form of weight $2-k$, level $pN$, and Nebentypus $\chi$.
\end{theorem}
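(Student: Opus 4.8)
The plan is to deduce this from Theorem~\ref{genericthm}(1): I will show that the unique $\alpha\in\AM$ it produces is $\alpha=0$, i.e.\ that $\FF_0^*=M^+|(1-p^{1-k}\beta' V(p))$ is a $p$-adic modular form of weight $2-k$, level $pN$, Nebentypus $\chi$. First I would observe that the split hypothesis makes $g$ $p$-ordinary. Writing $g=\sum_{\mathfrak a}\psi(\mathfrak a)q^{N(\mathfrak a)}$ for a Hecke character $\psi$ of $K$ with conductor prime to $p$ and infinity type of weight $k-1$, and $p\Op=\mathfrak p\bar{\mathfrak p}$, one has $\{\beta,\beta'\}=\{\psi(\mathfrak p),\psi(\bar{\mathfrak p})\}$ and $\beta\beta'=\psi((p))=\chi(p)p^{k-1}$, so $\ord_p(\beta)+\ord_p(\beta')=k-1$. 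Since $\iota$ singles out one of $\mathfrak p,\bar{\mathfrak p}$ and $\psi$ has integral infinity type, a short computation with $\mathfrak p^{h}=(\pi)$ shows that one of $\psi(\mathfrak p),\psi(\bar{\mathfrak p})$ is a $p$-adic unit while the other has $p$-order $k-1$; hence, with our ordering, $\ord_p(\beta)=0<\ord_p(\beta')=k-1$. In particular we are in the ``generic'' case, so Theorem~\ref{genericthm}(1) and equation \eqref{alphaevalgeneqn} apply; we also record that the shadow $g^c=\sum_{\mathfrak a}\overline\psi(\mathfrak a)q^{N(\mathfrak a)}$ is again a $p$-ordinary CM newform for $K$, and that in the CM setting $M^+$ has algebraic Fourier coefficients, so $\FF_0=M^+\in\Qp[[q,q^{-1}]]$ and the assertion $\alpha=0$ is meaningful.

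Next I would rewrite $\alpha$ entirely in terms of the coefficients $a_M(p^m)$. Using $D^{k-1}E_g=g$, the coefficient of $q^{p^m}$ in $D^{k-1}\FF_{a_M(1)}$ is $b_M(p^m)=p^{m(k-1)}a_M(p^m)-a_M(1)a_g(p^m)$, and $a_g(p^m)=\bigl(\beta^{m+1}-\beta'^{m+1}\bigr)/(\beta-\beta')$, so $a_g(p^m)/\beta^{m+1}\to 1/(\beta-\beta')$ because $\ord_p(\beta'/\beta)=k-1\ge 1$. Substituting into \eqref{alphaevalgeneqn} and simplifying gives
\[
\alpha=\frac{\beta-\beta'}{\beta}\,\lim_{m\to\infty}\frac{p^{m(k-1)}a_M(p^m)}{\beta^{m}} .
\]
Since $\ord_p(\beta-\beta')=\ord_p(\beta)=0$, the factor $(\beta-\beta')/\beta$ is a $p$-adic unit, and therefore
\[
\alpha=0\quad\Longleftrightarrow\quad \lim_{m\to\infty}\frac{p^{m(k-1)}a_M(p^m)}{\beta^{m}}=0\quad\Longleftrightarrow\quad \ord_p\!\bigl(a_M(p^m)\bigr)+m(k-1)\xrightarrow[m\to\infty]{}\infty .
\]

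It remains to prove $\ord_p(a_M(p^m))+m(k-1)\to\infty$, equivalently that the coefficient of $q^{p^m}$ in $D^{k-1}M^+$ has $p$-order tending to infinity; this is where the CM-split hypothesis is essential. The three-term Hecke recursion satisfied by the $p$-power coefficients of the weight-$k$ weakly holomorphic form $D^{k-1}\FF_{a_M(1)}$ is, by itself, self-referential, so one needs the CM structure to break the circularity. The mechanism is that the unit-root depletion $1-p^{1-k}\beta' V(p)$ annihilates exactly the ``$\bar{\mathfrak p}$-part'' of $M^+$ — the part carrying coefficients of $p$-order as negative as $-m(k-1)$ — leaving only the ordinary ``$\mathfrak p$-part''. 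Concretely, one uses the description of the canonical harmonic Maass form $M$ attached to the CM newform $g$: its non-holomorphic part $M^-$ is the non-holomorphic Eichler integral of $g^c$, whose Fourier development is controlled by $\overline{a_{g^c}(n)}=a_g(n)$, while its holomorphic part $M^+$ is correspondingly governed by Hecke $L$-data of $\overline\psi$; the split condition guarantees, via Katz-type $p$-adic interpolation of CM Hecke $L$-values at the split prime $p$, that the relevant value in the unit-root direction is $p$-integral, which forces $\ord_p(p^{m(k-1)}a_M(p^m))\to\infty$. The main obstacle is precisely this step — identifying the $p$-adic CM object that $\FF_0^*=M^+|(1-p^{1-k}\beta' V(p))$ computes and showing its $q^{p^m}$-coefficient is $p$-integrally small — since Steps 1 and 2 are essentially formal while Step 3 carries the genuine arithmetic content of the theorem.
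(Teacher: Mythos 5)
Your Steps 1 and 2 are correct and match the paper's reduction: the split hypothesis gives $\ord_p(\beta)=0$, and unwinding \eqref{alphaevalgeneqn} shows that $\alpha=0$ is equivalent to $\lim_{m\to\infty}p^{m(k-1)}a_M\left(p^m\right)/\beta^m=0$, i.e.\ to $\ord_p\left(a_M\left(p^m\right)\right)+m(k-1)\to\infty$ (this is exactly the statement $L_0=0$ in the paper's notation). But Step 3, which you yourself flag as the main obstacle, is where the actual content lies, and you have not supplied a proof of it. The appeal to ``Katz-type $p$-adic interpolation of CM Hecke $L$-values'' is not an argument: no precise statement is invoked, no identification of $a_M\left(p^m\right)$ with an interpolated $L$-value is made, and nothing in the paper's framework supports it. Moreover your proposed mechanism is misdirected: you attribute the integrality to the depletion operator $1-p^{1-k}\beta'V(p)$ killing the ``$\overline{\mathfrak p}$-part,'' but the assertion $\alpha=0$ concerns the coefficients of $M^+$ itself at $p$-power indices, before any depletion is applied.

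The idea you are missing is the paper's Lemma \ref{weakhollemma}: because $g$ has CM by $K$, the quadratic twist
$$
R:=\tfrac{1}{2}\left(M+M\otimes\chi_K\right)\otimes\chi_K
$$
is a \emph{weakly holomorphic} modular form with algebraic coefficients (the non-holomorphic part of $M$ is built from $g^c$, whose coefficients are supported on $n$ with $\chi_K(n)\neq -1$, so the twist kills $M^-$), and $a_R(n)=a_M(n)$ whenever $\chi_K(n)=1$. Since $p$ is split, $\chi_K\left(p^m\right)=1$ for all $m$, so $a_M\left(p^m\right)=a_R\left(p^m\right)$ is a coefficient of a genuine weakly holomorphic form. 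Bounded denominators (Proposition 2.1 of \cite{PavelKentOno}) then give $\ord_p\left(a_{D^{k-1}(R)}\left(p^m\right)\right)\geq m(k-1)-A$ for a fixed $A$, which combined with $\ord_p(\beta)=0$ yields $L_0=0$. This is an elementary, purely formal use of the CM structure; no $p$-adic $L$-function input is needed. As it stands your proposal has a genuine gap at precisely the step carrying the arithmetic content.
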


When we are not in the generic case, the situation can differ greatly, as the next theorem reveals.  In the case where $g$ has CM and $p$ is inert, the extra symmetry $\beta'=-\beta$ allows us to obtain a $p$-adic modular form correction of $M^+$ itself.  To do so, for $\alpha\in \Qp$, we define another type of corrected series
\begin{equation}\label{FFtadefeqn}
\widetilde{\FF}_{\alpha}:=M^+ - \alpha E_{g|V(p)}.
\end{equation}

\begin{theorem}\label{inertthm}
Assume that $g$ has CM by $K$, $p$ is inert in $\Op$, and $p\nmid N$.  Then there exists precisely one $\alpha\in \Qp$ such that $\widetilde{\FF}_{\alpha}$ is a $p$-adic modular form of weight $2-k$, level $pN$, and Nebentypus $\chi$, given by the $p$-adic limit
\begin{equation}\label{alphaevalinerteqn}
\alpha=\lim_{m\to\infty} \frac{a_{D^{k-1}(M^+)} \left(p^{2m+1}\right)}{\beta^{2m}}.
\end{equation}
\end{theorem}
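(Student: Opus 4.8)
The plan is to push the problem through the operator $D^{k-1}$ down to weight $k$, to observe that in the inert CM setting the parameter $\alpha$ interacts with only one ``tower'' of coefficients, and then to invoke the reconstruction argument underlying Theorem \ref{genericthm}.

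First I would apply $D^{k-1}$ to $\widetilde{\FF}_{\alpha}=M^+-\alpha E_{g|V(p)}$. Since $D^{k-1}$ sends the Eichler integral of a cusp form to that cusp form (and since $D^{k-1}V(p)=p^{k-1}V(p)D^{k-1}$), one gets $D^{k-1}(\widetilde{\FF}_{\alpha})=\Phi-\alpha\, g|V(p)$, where $\Phi:=D^{k-1}(M^+)$. By Bol's identity (together with the fact that $D^{k-1}$ applied to a harmonic weak Maass form of weight $2-k$ produces a weakly holomorphic modular form of weight $k$), $\Phi\in M_k^!(N,\chi)$; moreover $\Phi=D^{k-1}(\FF_{a_M(1)})+a_M(1)g$ with $D^{k-1}(\FF_{a_M(1)})$ having algebraic Fourier coefficients, so for \emph{every} $\alpha$ the form $\Phi-\alpha\,g|V(p)$ is a $p$-adic modular form of weight $k$ and level $pN$ (approximate $a_M(1)$ and $\alpha$ $p$-adically by algebraic numbers). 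Hence the content of the theorem is not the weight-$k$ modularity but the question: for which $\alpha$ is the distinguished $(k-1)$-st formal antiderivative of $\Phi-\alpha\, g|V(p)$ carrying the fixed algebraic principal part of $M^+$ (namely $\widetilde{\FF}_{\alpha}$) a $p$-adic modular form of weight $2-k$ and level $pN$?

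Now I would use the inert CM arithmetic. Since $g$ has CM by $K$ and $p$ is inert, $a_g(p)=0$, so the Hecke polynomial is $x^2+\chi(p)p^{k-1}$, whence $\beta'=-\beta$, $\beta^2=-\chi(p)p^{k-1}$, and the Hecke recursion gives $a_g(p^{2m})=\beta^{2m}$ and $a_g(p^{2m+1})=0$ for all $m\ge 0$. Therefore the coefficient of $q^{p^{j}}$ in $D^{k-1}(\widetilde{\FF}_{\alpha})=\Phi-\alpha\,g|V(p)$ equals $a_{\Phi}(p^{j})-\alpha\,a_g(p^{j-1})$, which is $a_{\Phi}(p^{2m})$ for $j=2m\ge 2$ and $a_{\Phi}(p^{2m+1})-\alpha\beta^{2m}$ for $j=2m+1$: the $\alpha$-dependence along the tower $\{p^{j}\}$ lives entirely on the odd exponents and is affine-linear in $\alpha$. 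The reconstruction criterion from the proof of Theorem \ref{genericthm}---which decides exactly when a weight-$k$ $p$-adic modular form with prescribed principal part equals $D^{k-1}$ of a weight-$(2-k)$ $p$-adic modular form, by analyzing the $U(p)$-action on the $p$-stabilized part of the antiderivative and using that $U(p)^2$ acts by the scalar $\beta^2$ on the $g$-isotypic piece because $g$ has CM---then separates into a condition on the even exponents, which does not involve $\alpha$, and one on the odd exponents. The even-exponent condition is automatically satisfied: since $a_{\Phi}(p^{2m})=b_M(p^{2m})+a_M(1)\beta^{2m}$, it is equivalent to the convergence of the $p$-adic limit $\lim_{m\to\infty}\beta^{-2m}b_M(p^{2m})$, a known property of mock modular forms good for $g^c$ (it is the limit recovering the generically transcendental number $a_M(1)$; cf. Proposition 2.2 of \cite{PavelKentOno}). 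The odd-exponent condition reads $\beta^{-2m}(a_{\Phi}(p^{2m+1})-\alpha\beta^{2m})\to 0$, that is, $\alpha=\lim_{m\to\infty}a_{\Phi}(p^{2m+1})/\beta^{2m}=\lim_{m\to\infty}a_{D^{k-1}(M^+)}(p^{2m+1})/\beta^{2m}$, which is precisely \eqref{alphaevalinerteqn}. This pins $\alpha$ down uniquely, and---granting convergence of the limit, which follows as in Theorem \ref{genericthm} from $D^{k-1}(\FF_{a_M(1)})$ being a weight-$k$ $p$-adic modular form with algebraic coefficients together with the identity $a_{D^{k-1}(M^+)}(p^{2m+1})=b_M(p^{2m+1})$ on odd $p$-power indices (the $E_g$-contribution vanishing there because $a_g(p^{2m+1})=0$)---establishes existence.

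The main obstacle is to set up the weight-$(2-k)$ reconstruction criterion rigorously at level $pN$: one must control how $U(p)$ and the $p$-depletion and $p$-stabilization operators interact with taking a $(k-1)$-st antiderivative of a weight-$k$ $p$-adic modular form, and in particular verify that for the distinguished $\alpha$ no obstruction survives on the even $p$-power tower. This is the step that genuinely uses that $g$ has CM---so that $U(p)^2$ acts on the relevant Hecke component as the scalar $\beta^2$---rather than merely the symmetry $\beta'=-\beta$, and it is also where the inert hypothesis (as opposed to the split case handled by Theorem \ref{splitthm}) is essential.
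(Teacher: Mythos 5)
Your skeleton is the right one --- push everything to weight $k$ via $D^{k-1}$, use $\beta'=-\beta$ to split the $p$-power coefficients into an even tower and an odd tower, and read $\alpha$ off the odd tower --- and your derivation of \eqref{alphaevalinerteqn} from the odd-exponent condition agrees with the paper, which gets it by substituting $\delta=-\alpha/\beta$, $\widetilde{\alpha}=-\delta$ into Proposition \ref{padiclimprop} (3) so that $\FF_{\widetilde{\alpha},\delta}=\widetilde{\FF}_{\alpha}$. The genuine gap is at the even tower, which is where the actual content of the theorem sits. The condition there is not the \emph{convergence} of $\lim_{m}\beta^{-2m}b_M\left(p^{2m}\right)$ but the \emph{vanishing} of $\lim_{m}\beta^{-2m}a_{D^{k-1}(M^+)}\left(p^{2m}\right)$ (equivalently, that the former limit converges to the specific value $-a_M(1)$); in the paper's notation this is the statement $A_p=0$, i.e.\ that the limit $W_{\alpha}=\lim_m \beta^{-2m}\widetilde{F}_{\alpha}|U\left(p^{2m+1}\right)$ has no $g|V(p)$-component. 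Since varying $\alpha$ only moves the $g$-component, if $A_p\neq 0$ then \emph{no} $\alpha$ works and the theorem fails. Convergence is cheap; the vanishing is not a formal property of mock modular forms good for $g^c$, and your sketch supplies no argument for it.

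Relatedly, you misplace where the CM hypothesis is used. That $U(p)^2$ acts by the scalar $\beta^2$ on the span of $g$ and $g|V(p)$ follows from $a_g(p)=0$ alone (equivalently from $\beta'=-\beta$); with only that hypothesis one gets a unique \emph{pair} $(\alpha,\delta)$ making $\FF_{\alpha,\delta}$ a $p$-adic modular form (this is the paper's first remark after the theorem), but not that this pair lies on the line $\delta=-\alpha/\beta$ parametrizing the forms $\widetilde{\FF}_{\alpha}$. What forces $A_p=0$ is Lemma \ref{weakhollemma}: because $g$ has CM by $K$, the twist $\frac{1}{2}\left(M+M\otimes\chi_K\right)\otimes\chi_K$ is a genuine weakly holomorphic form $R$ whose coefficients agree with those of $M^+$ at all $n$ with $\chi_K(n)=1$; since $p$ is inert, $\chi_K\left(p^{2m}\right)=1$, so $a_{D^{k-1}(M^+)}\left(p^{2m}\right)=a_{D^{k-1}(R)}\left(p^{2m}\right)$ has $p$-order at least $2m(k-1)-A$ by Proposition 2.1 of \cite{PavelKentOno}, which beats $\ord_p\left(\beta^{2m}\right)=m(k-1)$ and kills the limit. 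Without this twisting step (or a substitute) your argument does not close. A secondary point: you lean on ``the reconstruction criterion from the proof of Theorem \ref{genericthm},'' but the inert case has $\ord_p(\beta)=\ord_p(\beta')=\frac{k-1}{2}$, so Proposition \ref{padiclimprop} (2) and the limits $h_{\alpha}$ are unavailable; only part (3) applies (as $\ord_p(\beta')\neq k-1$), and the existence of the relevant limit $W_{\alpha}$ must be established separately, as the paper does via Proposition 2.3 of \cite{PavelKentOno} and the identity $\beta^{-2m}g|V(p)|U\left(p^{2m+1}\right)=g$.
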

\begin{remarks}
\noindent
\begin{enumerate} 
\item[]\hspace{-36pt}(1)\hspace{21pt}It is worth remarking that the proof of Theorem \ref{inertthm} shows more generally that there exists exactly one pair $(\alpha,\delta)\in \AM\times \Qp$ such that $\FF_{\alpha,\delta}$ is a $p$-adic modular form whenever $p\nmid N$ and $a_g(p)=0$, but the properties of the CM form allow us to relate $\delta$ and $\alpha$ in this case. 
\item[]\hspace{-36pt}(2)\hspace{21pt}It is an interesting question whether $\alpha=0$ ever occurs in Theorem \ref{inertthm}.  In a particular example considered in \cite{PavelKentOno}, it was shown that this does not happen for every prime  $p<32500$.
\end{enumerate}
\end{remarks}

For every $\alpha\in \AM$, we define
$$
\widetilde{F}_{\alpha}:=D^{k-1}\left(M^+ - \alpha E_{g|V(p)}\right),
$$
so that if $g$ has CM by $K$ and $p$ is inert in $\Op$, then for $\alpha$ given in equation \eqref{alphaevalinerteqn}, $\widetilde{F}_{\alpha}$ is the image under $D^{k-1}$ of the $p$-adic modular form given in Theorem \ref{inertthm}.  Let $U(p)$ denote the usual $U$-operator.  We will conclude Theorem \ref{inertthm} from the following proposition.  
\begin{proposition}\label{inertprop}
Assume that $g$ has CM by $K$, $p$ is inert in $\Op$, and $p\nmid N$.  
Then for all but exactly one $\alpha\in \Qp$, we have the $p$-adic limit
\begin{equation}\label{inertpropeqn}
\lim_{m\to\infty} \frac{\widetilde{F}_{\alpha}|U\left(p^{2m+1}\right)}{a_{\widetilde{F}_{\alpha}}\left(p^{2m+1}\right)}=g.
\end{equation}
\end{proposition}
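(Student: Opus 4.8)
The plan is to apply $U(p^{2m+1})$ to the corrected series and show that, for $m$ large, $\widetilde F_\alpha|U(p^{2m+1})$ is an \emph{exactly} geometric sequence in $\beta$ whose ratio direction is a fixed cusp form; the limit in \eqref{inertpropeqn} will then be constant in $m$, and the content of the proposition will be the identification of that fixed cusp form with $g$.

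First I would record the elementary consequences of $a_g(p)=0$. Since $D^{k-1}(E_{g|V(p)})=g|V(p)$ we have $\widetilde F_\alpha=D^{k-1}(M^+)-\alpha\,(g|V(p))$, and $a_g(p)=0$ gives $g|U(p)=\beta^2\,g|V(p)$, $g|U(p^2)=\beta^2 g$, hence $(g|V(p))|U(p^{2m+1})=g|U(p^{2m})=\beta^{2m}g$. By \cite{BruinierOnoRhoades}, $G:=D^{k-1}(M^+)$ is a weakly holomorphic modular form in $M_k^!(N,\chi)$, and since $M$ is good for $g^c$ its principal parts at the cusps other than $\infty$ are constant, so $G$ is holomorphic away from $\infty$ with vanishing constant term there. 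Thus everything reduces to computing $G|U(p^{2m+1})$.

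The key structural step is the Hecke operator at $p$. Because $a_g(p)=0$, the shadow of $M|T_p^{(2-k)}$ vanishes, so $M|T_p^{(2-k)}\in M_{2-k}^!(N,\chi)$; applying $D^{k-1}$ and Bol's identity gives $G|T_p^{(k)}\in D^{k-1}\!\bigl(M_{2-k}^!(N,\chi)\bigr)$. This last space is stable under $U(p)$ and meets $M_k(pN,\chi)$ only in $\{0\}$, since $D^{k-1}$ kills every holomorphic modular form of weight $2-k\le 0$. As $U(p^j)$ annihilates the principal part of $G$ for $j$ large, the form $G|T_p^{(k)}|U(p^j)=G|U(p^{j+1})+\chi(p)p^{k-1}\,G|U(p^{j-1})$ is holomorphic and lies in $D^{k-1}\!\bigl(M_{2-k}^!(pN,\chi)\bigr)$, hence vanishes; this is the \emph{exact} recursion $G|U(p^{j+2})=\beta^2\,G|U(p^j)$ for $j\gg0$. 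Fixing an odd $j_1\gg0$ and putting $\Psi:=G|U(p^{j_1})$ (so $\Psi|U(p^2)=\beta^2\Psi$), we obtain $G|U(p^{2m+1})=\beta^{\,2m+1-j_1}\Psi$, and therefore
\[
\widetilde F_\alpha|U(p^{2m+1})=\beta^{2m}\bigl(\beta^{\,1-j_1}\Psi-\alpha\,g\bigr)\qquad(2m+1\ge j_1).
\]
Hence the quantity in \eqref{inertpropeqn} is independent of $m$, equal to $\Theta_\alpha/a_{\Theta_\alpha}(1)$ with $\Theta_\alpha:=\beta^{\,1-j_1}\Psi-\alpha g$; the limit exists, and it equals $g$ exactly when $\Theta_\alpha$ is a scalar multiple of $g$.

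It remains to prove $\Psi$ is a scalar multiple of $g$. Repeating the Hecke argument with $T_q$ for all $q\nmid pN$ — here the conjugate-linearity of $\xi_{2-k}$ replaces $\overline{a_g(q)}$ by $a_g(q)$ — gives $G|T_q^{(k)}\equiv a_g(q)\,G\pmod{D^{k-1}(M_{2-k}^!(N,\chi))}$, and applying $U(p^{j_1})$ (which commutes with $T_q$) together with $D^{k-1}(M_{2-k}^!(pN,\chi))\cap M_k(pN,\chi)=\{0\}$ yields $\Psi|T_q^{(k)}=a_g(q)\,\Psi$ for every $q\nmid pN$. Moreover $\Psi$ has no Eisenstein component (the $U(p)$-eigenvalues on the Eisenstein subspace have $p$-adic valuation $0$ or $k-1$, never $\ord_p(\beta)=\frac{k-1}{2}$), so $\Psi$ is a cusp form, and by strong multiplicity one $\Psi\in\mathrm{span}_{\C}\{g,\,g|V(p)\}$, say $\Psi=a\,g+b\,(g|V(p))$. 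The one remaining point, $b=0$, is where the CM hypothesis is genuinely used: the inertness of $p$ gives $\beta'=-\beta$, and, combined with the explicit shape of $M^-$ as the non-holomorphic Eichler integral of the CM form $g^c$ (cf.\ \cite{PavelKentOno}), it forces the $\beta$- and $\beta'$-eigencomponents of $\Psi$ to agree, so $b=0$ and $\Psi=a_\Psi(1)\,g$. I expect this to be the main obstacle. Granting it, $\Theta_\alpha=\bigl(\beta^{\,1-j_1}a_\Psi(1)-\alpha\bigr)g$, so $\Theta_\alpha/a_{\Theta_\alpha}(1)=g$ for every $\alpha\in\Qp$ except $\alpha=\beta^{\,1-j_1}a_\Psi(1)$ (for which $\widetilde F_\alpha|U(p^{2m+1})=0$ once $m$ is large, and the normalized limit is undefined), which is exactly the assertion of the proposition.
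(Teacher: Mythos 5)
Your reduction to the behaviour of $G:=D^{k-1}(M^+)$ under iterated $U(p)$ is fine up to the point where you claim the \emph{exact} recursion $G|U(p^{j+2})=\beta^2\,G|U(p^j)$ for $j\gg 0$; that step is false, and it is the load-bearing step of your argument. You argue that $r_p:=G|T(p)\in D^{k-1}\bigl(M_{2-k}^!(N,\chi)\bigr)$ becomes holomorphic after applying $U(p^j)$ and hence vanishes because $D^{k-1}(M_{2-k}^!)\cap M_k=\{0\}$. But $U(p^j)$ only removes the principal part at the cusp $\infty$; it simultaneously transports the pole of $r_p$ to the cusps of $\Gamma_0(pN)$ lying above $\infty$ at level $N$ (already in level one, the $v=0$ term of $\bigl(f|U(p^j)\bigr)|_k\sm{0}{-1}{1}{0}$ contributes $p^{j(k-1)}f\left(p^jz\right)$, whose principal part grows with $j$). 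So $r_p|U(p^j)$ does not lie in $M_k(pN,\chi)$ and the intersection argument does not apply. You can also see that the conclusion cannot hold on archimedean grounds: your exact recursion forces $|a_M(np^{2j})|$ to decay geometrically in $j$, whereas the coefficients of the holomorphic part of a harmonic weak Maass form with a pole at $\infty$ grow like $e^{c\sqrt{n}}$. What survives is only a $p$-adic statement: iterating $G|U(p^{j+1})=r_p|U(p^j)+\beta^2 G|U(p^{j-1})$ gives
\begin{equation*}
\beta^{-2m}G|U\left(p^{2m+1}\right)=\sum_{j=0}^{m}\beta^{-2j}\,r_p|U\left(p^{2j}\right)+\beta^2\,G|V(p),
\end{equation*}
and the tail tends to $0$ $p$-adically because the $np^{2j}$-th coefficients of elements of $D^{k-1}(M_{2-k}^!(N,\chi))$ have $p$-order at least $2j(k-1)-A$ (Proposition 2.1 of \cite{PavelKentOno}) while $\ord_p\left(\beta^{2j}\right)=j(k-1)$. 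This is exactly how the paper proceeds: it works with the $p$-adic limit $W_{\alpha}=\lim_m \beta^{-2m}\widetilde{F}_{\alpha}|U\left(p^{2m+1}\right)$, which exists but is not an eventually constant sequence.

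The second problem is the point you flag yourself: proving that the $g|V(p)$-component of the limit vanishes. This is not a soft consequence of $\beta'=-\beta$; it is precisely where the CM hypothesis does its work, and your appeal to ``the explicit shape of $M^-$'' is not an argument. The paper settles it with Lemma \ref{weakhollemma}: the twist $\frac12\left(M+M\otimes\chi_K\right)\otimes\chi_K$ is weakly holomorphic, and since $\chi_K\left(p^{2m+2}\right)=1$ for $p$ inert, the coefficient $a_{D^{k-1}(M)}\left(p^{2m+2}\right)$ that computes this component coincides with a coefficient of a weakly holomorphic form and is therefore divisible by $p^{(2m+2)(k-1)-A}$, which kills the $g|V(p)$-term after dividing by $\beta^{2m}$. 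Finally, your identification of $\Psi$ via strong multiplicity one over $\C$ cannot be salvaged as written, since $\Psi$ is not a holomorphic cusp form; the Hecke-eigenform analysis has to be carried out on the $p$-adic limit $W_{\alpha}$ itself, coefficient by coefficient, as the paper does.
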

\begin{remark}
Proposition \ref{inertprop} corrects the original statement of Theorem 1.2 (2) of \cite{PavelKentOno}, which contained an error that was pointed out and fixed by the third author.  The proof given here was then subsequently included in the final version of \cite{PavelKentOno}.  The existence of the limit in the original statement, with 
\begin{equation}\label{Fadefeqn}
F_{\alpha}:=D^{k-1}(\FF_{\alpha})
\end{equation}
instead of $\widetilde{F}_{\alpha}$, is independent of $\alpha$, and one cannot even conclude existence of that limit without determining explicitly that $\alpha\neq 0$ in this theorem.  Even if the limit with $F_{\alpha}$ would exist, such a limit would have a linear dependence on $\alpha$ times $g|V(p)$, and hence could not equal $g$, as stated in \cite{PavelKentOno}.
\end{remark}

Lastly, we deal with the ``bad'' primes.
\begin{theorem}\label{divisorsNthm}
Let $p\mid N$ and $g$ be a newform.  Then the following hold.
\noindent
\begin{enumerate}
\item[]\hspace{-36pt}\textnormal{(1)}\hspace{21pt}If $a_g(p)=0$, then $\FF_{\alpha}$ is a $p$-adic modular form of level $N$, and Nebentypus $\chi$ for every $\alpha\in \AM$.
\item[]\hspace{-36pt}\textnormal{(2)}\hspace{21pt}If $a_g(p)\neq 0$, then there exists exactly one $\alpha\in \AM$ such that $\FF_{\alpha}$ is a $p$-adic modular form of weight $2-k$, level $N$, and Nebentypus $\chi$.  The unique choice of $\alpha$ is given by 
\begin{equation}\label{alphadivNeqn}
\alpha=a_M(1) + \lim_{m\to\infty} \frac{b_M\left(p^m\right)}{a_g(p)^m},
\end{equation}
where $b_M\left(n\right)$ is given in equation \eqref{bMdefeqn}.
\end{enumerate}
\end{theorem}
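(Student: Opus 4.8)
The plan is to follow the proof of Theorem~\ref{genericthm} almost verbatim, exploiting that for $p\mid N$ the Euler factor of the newform $g$ at $p$ is the degree-one polynomial $1-a_g(p)X$: the role of the two roots $\beta,\beta'$ there is taken by the single number $\beta:=a_g(p)$ together with $\beta'=0$, the stabilisation $1-p^{1-k}\beta'\,V(p)$ of~\eqref{FF*adefeqn} collapses to the identity so that $\FF^*_\alpha=\FF_\alpha$ and $b^*_M(n)=b_M(n)$, and this already explains why~\eqref{alphadivNeqn} is~\eqref{alpha*evalgeneqn} under $\beta=a_g(p)$. Two standard facts enter. First, for a newform $g$ of level $N$ with $p\mid N$ one has $g\mid U(p)=a_g(p)g$, so that $g\mid U(p)=0$ (equivalently $g$ is $p$-depleted) precisely when $a_g(p)=0$. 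Second, $F_{a_M(1)}:=D^{k-1}(\FF_{a_M(1)})$ lies in $M_k^!(N,\chi)$ and has coefficients in $K_g$ — indeed $\FF_{a_M(1)}$ has coefficients in $K_g$ by Theorem~1.1 of~\cite{PavelKentOno}, and $D^{k-1}$ carries the holomorphic part of a harmonic weak Maass form of weight $2-k$ into $M_k^!(N,\chi)$ — so $b_M(n)\in K_g$. One also uses the consequence of Serre's $\theta$-operator theory (\cite{Serre72}; see also~\cite{PavelKentOno}) that if $H$ is a $p$-depleted $p$-adic modular form of weight $k$ then $D^{1-k}H:=\sum_n n^{1-k}a_H(n)q^n$ is a $p$-adic modular form of weight $2-k$ of the same level, together with the $q$-expansion identities $D^{k-1}(f\mid V(p))=p^{k-1}(D^{k-1}f)\mid V(p)$ and $D^{k-1}(f\mid U(p))=p^{1-k}(D^{k-1}f)\mid U(p)$, which underlie writing $D^{1-k}$ as a $p$-adically convergent series in $U(p)$ and $V(p)$.

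\textbf{Part (1).} If $a_g(p)=0$ then $g$ is $p$-depleted, so $E_g=D^{1-k}g$ is a $p$-adic modular form. Since $a_M(n)=a_{\FF_{a_M(1)}}(n)+a_M(1)n^{1-k}a_g(n)$ and $a_g(pn)=0$, one has $a_M(pn)=a_{\FF_{a_M(1)}}(pn)\in K_g$, hence $\FF_{a_M(1)}\mid U(p)=M^+\mid U(p)$ has coefficients in $K_g$; moreover the shadow of $M$ is a multiple of $g^c$, which is also $p$-depleted, so $M\mid U(p)$ has vanishing shadow and therefore $M^+\mid U(p)=M\mid U(p)$ is a weakly holomorphic modular form of weight $2-k$ and level $N$, in particular a $p$-adic modular form. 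In the splitting $\FF_{a_M(1)}=\FF_{a_M(1)}^{\flat}+(M^+\mid U(p))\mid V(p)$ the first summand equals $D^{1-k}(F_{a_M(1)}^{\flat})$, a $p$-adic modular form by the $\theta$-operator input, so $\FF_{a_M(1)}$ is a $p$-adic modular form. Finally $\FF_\alpha=\FF_{a_M(1)}-\gamma E_g$, and since $E_g$ is a $p$-adic modular form and the $p$-adic modular forms of a fixed level form a $\Qp$-module (because $\Qp$ is the $p$-adic closure of $\overline{\Q}$), $\FF_\alpha$ is a $p$-adic modular form for every $\gamma\in\Qp$, i.e.\ for every $\alpha\in\AM$.

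\textbf{Part (2).} Assume $a_g(p)\neq 0$ and put $\beta:=a_g(p)$, so $g\mid U(p)=\beta g$ and $a_g(p^m)=\beta^m$; note $\ord_p(\beta)\leq k-1$, since $|a_g(p)|^2\in\{p^{k-2},p^{k-1}\}$ at every archimedean place forces $a_g(p)\mid p^{k-1}$ in $\overline{\Z}$. For uniqueness, expand $E_g=D^{1-k}g$ via the $U(p)$--$V(p)$ decomposition: $E_g=\sum_{m\geq 0}(p^{1-k}\beta)^m\,D^{1-k}(g^{\flat})\mid V(p^m)$, where $D^{1-k}(g^{\flat})\neq 0$ (as $g$ is not $p$-depleted) and $\ord_p(p^{1-k}\beta)=1-k+\ord_p(\beta)\leq 0$; hence the terms do not tend to $0$ and $E_g$ is not a $p$-adic modular form, so two distinct $\alpha_1,\alpha_2\in\AM$ cannot both make $\FF_{\alpha_i}$ a $p$-adic modular form (their difference being a nonzero multiple of $E_g$). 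For the formula and existence, set $F_\alpha:=D^{k-1}(\FF_\alpha)=F_{a_M(1)}-\gamma g$; the level-$N$ analogue of Proposition~2.2 of~\cite{PavelKentOno} provides a unique $\gamma_0\in\Qp$ with $F_{a_M(1)}\mid U(p)^m/\beta^m\to\gamma_0g$, the convergence being rapid enough that $\ord_p\big((F_{a_M(1)}-\gamma_0g)\mid U(p)^m\big)-m(k-1)\to+\infty$; comparing $q^1$-coefficients and using $a_{F_{a_M(1)}\mid U(p)^m}(1)=b_M(p^m)$ gives $\gamma_0=\lim_{m\to\infty}b_M(p^m)/\beta^m$, which is~\eqref{alphadivNeqn}. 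Substituting $\gamma_0$ into the then $p$-adically convergent series
\[
\FF_{\alpha_0}=\sum_{m\geq 0}p^{m(1-k)}\,D^{1-k}\!\Big(\big(F_{\alpha_0}\mid U(p)^m\big)^{\flat}\Big)\Big|\,V(p^m),
\]
each of whose terms is a $p$-adic modular form of weight $2-k$, level $N$, and Nebentypus $\chi$, exhibits $\FF_{\alpha_0}$ as the required $p$-adic modular form (no $p$-stabilisation arises, since $p\mid N$).

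\textbf{The main obstacle.} Just as for Theorem~\ref{genericthm}, the crux is the rate of convergence of $F_{a_M(1)}\mid U(p)^m/\beta^m$ to $\gamma_0g$ needed to make the last series converge: one must know that, on the pertinent space of weakly holomorphic weight-$k$ forms, the part of $U(p)$ transverse to the line spanned by $g$ has small enough $p$-adic ``spectral radius'' that, after dividing by $\beta^m$ and by the unavoidable $p^{m(k-1)}$, the tails still tend to $0$. For $p\nmid N$ this is furnished in~\cite{PavelKentOno} through the structure of the $U(p)$-operator on such spaces (the $\theta$-cycle and ordinary projection), and the real work is to verify that that analysis — together with the bookkeeping that the level of the answer is $N$ rather than $pN$, and that its weight is exactly $2-k$ and not merely $2-k$ in Serre's weight space — goes through without change when $p\mid N$.
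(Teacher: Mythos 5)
Part (1) of your proposal is correct and is essentially the paper's argument in different clothing: your $p$-depletion $\FF_{a_M(1)}^{\flat}=\FF_{a_M(1)}-\FF_{a_M(1)}|U(p)V(p)$ is exactly the paper's twist $\FF_{\alpha}\otimes\chi_p^2$, the weak holomorphy of $M^+|U(p)$ is the paper's observation that $M^+\otimes\chi_p^2-M^+$ is weakly holomorphic, and both proofs then reduce to the Euler's-theorem approximation of $D^{1-k}$ of a $p$-depleted weakly holomorphic form (Proposition \ref{Egclassicalprop} and its extension).

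Part (2) has a genuine gap, and it is precisely the point you flag at the end as ``the main obstacle'': the existence of the limit $\lim_m \beta^{-m}F_{a_M(1)}|U\left(p^m\right)$ and the rate of convergence needed to make your final series converge. You defer this to ``the level-$N$ analogue of Proposition 2.2 of \cite{PavelKentOno}'', which is stated there only for $p\nmid N$ and which you do not establish; without it neither the existence of $\gamma_0$ nor the convergence of your closing series is proved. The paper needs no such spectral input: since $p\mid N$, the forms $M|U(p)$ and $a_g(p)p^{1-k}M$ have the same non-holomorphic part, so $M^+|U(p)=a_g(p)p^{1-k}M^++R_p$ with $R_p$ weakly holomorphic; rewriting this as $R_p=-a_g(p)p^{1-k}\FF_{\alpha}|\left(1-a_g(p)^{-1}p^{k-1}U(p)\right)$ and applying $\sum_{\ell=0}^{m-1}a_g(p)^{-\ell}p^{\ell(k-1)}U\left(p^{\ell}\right)$ telescopes to equation \eqref{pdivNMevaleqn}, in which the limit $G_{\alpha}=\lim_m a_g(p)^{-m}p^{m(k-1)}\FF_{\alpha}|U\left(p^m\right)$ appears as the difference of $\FF_{\alpha}$ and a convergent series, so its existence is automatic; the Hecke-eigenform computation then forces $G_{\alpha}=\Lambda_{\alpha}E_g$ and the uniqueness follows from $E_g$ not being a $p$-adic modular form. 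Two further imprecisions: your bound $\ord_p\left(a_g(p)\right)\leq k-1$ is too weak both for the non-$p$-adicity of $E_g$ and for the convergence of the geometric series in $R_p$ --- one needs the strict inequality, which holds because $a_g(p)=-\lambda_p p^{\frac{k}{2}-1}$ when $p\,\|\,N$, so $\ord_p\left(a_g(p)\right)=\frac{k}{2}-1<k-1$; and ``the terms do not tend to $0$, hence $E_g$ is not a $p$-adic modular form'' is not an argument --- the correct statement, as in Lemma \ref{uniquelemma}, is that $\ord_p\left(a_{E_g}\left(p^m\right)\right)=m\left(\frac{k}{2}-1\right)-m(k-1)\to-\infty$, contradicting the bounded denominators of $p$-adic modular forms.
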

\begin{remark}
If $g$ has CM, then $a_M(1)\in \Qp$ and hence in this case $\AM=\Qp$.  In particular, the choice $\alpha=0\in \AM$ is valid.  Furthermore, when $p\mid N$, the condition $a_g(p)=0$ appearing in Theorem \ref{divisorsNthm} (1) is equivalent to the restriction $p^2\mid N$ (cf. \cite{OnoBook}, p. 29) and hence depends only on the level and not the given form $g$.  Therefore when $g$ has CM and $p^2\mid N$, then $M^+$ and $E_g$ are both $p$-adic modular forms of level $N$ and Nebentypus $\chi$.  One also concludes from Theorem \ref{divisorsNthm} (2) that if $g$ has CM and $p|| N$, then there exists exactly one $\alpha\in \Qp$ such that $\FF_{\alpha}$ is a $p$-adic modular form of weight $2-k$, level $N$, and Nebentypus $\chi$.  Specifically, for those primes $p$ which divide the discriminant of the CM field but do not divide the norm of the conductor of the Gr\"ossencharacter associated to $g$, there is precisely one $\alpha\in\Qp$ such that $\FF_{\alpha}$ is a $p$-adic modular form.
\end{remark}

The paper is organized as follows:  Section \ref{PreliminarySection} is devoted to a quick overview of the relevant definitions involving harmonic weak Maass forms.  In Section \ref{GenericSection}, we define an operator which recasts the question about our corrected series being a $p$-adic modular form to one about a certain $p$-adic limit equaling zero.  We then use this equivalence to establish the results in the generic case.  In Section \ref{CMSection} we combine the aforementioned equivalence and the fact that a certain quadratic twist is a weakly holomorphic modular form to establish the results when $g$ has CM.

\section{Basic facts on harmonic weak Maass forms}\label{PreliminarySection}

We begin by recalling the relevant definitions and some important facts about harmonic weak Maass forms.  For further details, we refer the reader to \cite{BruinierFunke}.  We will write an element of the upper half plane as $z=x+iy$ with $x,y\in \R$.  Denote the \begin{it}weight $2-k$ hyperbolic Laplacian\end{it} by 
$$
\Delta_{2-k}:=-y^2\left(\frac{\partial^2}{\partial x^2}+\frac{\partial^2}{\partial y^2}\right)+i(2-k) y\left(\frac{\partial}{\partial x} + i\frac{\partial}{\partial y}\right).
$$
A \begin{it}harmonic weak Maass form\end{it} $M$ of weight $2-k$, level $N$, and Nebentypus $\chi$ is defined as a smooth function on the upper half plane which satisfies the following properties:
\begin{enumerate}
\item 
For all $\left(\begin{smallmatrix} a&b \\ c &d\end{smallmatrix}\right)\in \Gamma_0(N)$ we have
$$
M\left(\frac{az+b}{cz+d}\right) = \chi(d) (cz+d)^{2-k}M(z).
$$ 
\item We have that $\Delta_{2-k}(M)=0$.
\item The function $M(z)$ has at most linear exponential growth at all cusps of $\Gamma_0(N)$.
\end{enumerate}
We denote this space of harmonic weak Maass forms by $H_{2-k}(N,\chi)$. 

We now restrict our attention to the subspace of harmonic weak Maass forms which map to cusp forms under the $\xi$-operator.  Taking the Fourier expansion in terms of $e^{2\pi i x}$, $M$ decomposes naturally as 
$$
M(z)=M^+(z) +M^-(z),
$$
where the \begin{it}holomorphic part\end{it} of $M$ is given by
$$
M^+(z):=\sum_{n\gg -\infty} a_{M}(n) q^n
$$
and the \begin{it}non-holomorphic part\end{it} of $M$ is defined by
\begin{equation}\label{M-eqn}
M^-(z):= \sum_{n>0} a_{M}^{-}(-n) \Gamma\left(1-k;4\pi n y\right) q^{-n}.
\end{equation}
Here
$$
\Gamma(a;x):=\int_x^{\infty} e^{-t}t^{a-1} dt
$$
is the incomplete $\Gamma$-function.  One refers to 
$$
\sum_{n\leq 0} a_{M}(n)q^n
$$
as the \begin{it}principal part\end{it} of $M$.

As defined in \cite{BruinierOnoRhoades}, we say that a harmonic weak Maass form $M\in H_{2-k}(N,\chi)$ is \begin{it}good for\end{it} the cusp form $g\in S_k(N,\overline{\chi})$ if the following conditions are satisfied:
\begin{itemize}
\item[(i)] The principal part of $M$ at the cusp $\infty$ belongs to $K_g[q^{-1}]$.
\item[(ii)] The principal parts of $M$ at other cusps of $\Gamma_0(N)$ are constant.
\item[(iii)] We have that $\xi_{2-k}(M)=\|g\|^{-2}g$, where $\| \cdot \|$ is the usual Petersson norm.
\end{itemize}

We let $T(n)$ denote the $n$-th Hecke operator.  The action of the Hecke operator on a harmonic weak Maass form $M$ is defined analogously to that on holomorphic modular forms.  We denote this action by $M|_{2-k} T(n)$, suppressing the weight in the notation when it is clear from the context.  We extend this definition to formal power series in the obvious way.  One can show that (see \cite{BruinierOnoRhoades}, the proof of Theorem 1.3)
\begin{equation}\label{Heckeeqn}
M|_{2-k}T(n) = n^{1-k} a_g(n) M +R_n,
\end{equation}
for some weakly holomorphic modular form $R_n\in M_{2-k}^!(N,\chi)$.  We use the commutation relation between $D^{k-1}$ and $T(n)$ given by
\begin{equation}\label{Heckecommuteeqn}
n^{k-1}D^{k-1}\big( E|_{2-k}T(n)\big) = \left(D^{k-1} E\right)|_k T(n),
\end{equation}
which is valid for every formal power series $E$.  This yields that 
\begin{equation}\label{rneqn}
F_{\alpha}|_k T(n) = a_g(n) F_{\alpha} + r_n,
\end{equation}
where $F_{\alpha}$ was defined in \eqref{Fadefeqn} and 
\begin{equation}\label{rndefeqn}
r_n:=n^{k-1} D^{k-1}(R_n).
\end{equation}
Similarly to $F_{\alpha}$, for $\FF_{\alpha,\delta}$ defined in \eqref{FFaddefeqn}, we let
\begin{equation}\label{Faddefeqn}
F_{\alpha,\delta}:=D^{k-1}(\FF_{\alpha,\delta}).
\end{equation}
When $n$ and $p$ are coprime, one obtains
\begin{equation}\label{rnadeqn}
F_{\alpha,\delta}|_k T(n) = a_g(n) F_{\alpha,\delta} + r_n.
\end{equation}

\section{A Criterion for $p$-adic modular forms and the proof of Theorem \ref{genericthm}}\label{GenericSection}

The uniqueness of $\alpha$ in the theorems will follow from the fact that $E_{g}$ and $E_{g|V(p)}$ are not $p$-adic modular forms when $p\nmid N$.  We shall show this through the following simple lemma.

\begin{lemma}\label{uniquelemma}
\noindent
\begin{enumerate}
\item Assume that $p\nmid N$ and either $\ord_p(\beta)<\ord_p(\beta')$ or $\beta'=-\beta$.  Then $E_g$, $E_{g|V(p)}$, and $E_g-\beta'p^{1-k}E_{g}|V(p)=E_{g-\beta' g|V(p)}$ are not $p$-adic modular forms.  Furthermore, if $\ord_p{\beta'}\neq k-1$, then $E_{g-\beta g|V(p)}$ is not a $p$-adic modular form.  If $p || N$ then $E_g$ is not a $p$-adic modular form.
\end{enumerate}
\end{lemma}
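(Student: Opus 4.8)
The plan is to show directly that each of the listed Eichler integrals has a sequence of Fourier coefficients whose $p$-adic valuations tend to $-\infty$; this rules it out as a $p$-adic modular form, because every $p$-adic modular form in our sense has bounded denominators. Indeed, applying the definition with $m=1$ produces a weakly holomorphic modular form $H_1=\sum b_1(n)q^n$ of some fixed weight, level $N$, Nebentypus $\chi$, and pole order, with algebraic coefficients, such that $\inf_n\ord_p(a(n)-b_1(n))\geq 1$; hence $\inf_n\ord_p(a(n))\geq\min\big(1,\inf_n\ord_p(b_1(n))\big)>-\infty$, the last inequality because the finitely many relevant weakly holomorphic forms have bounded $p$-adic denominators. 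So it suffices to examine the coefficients of $q^{p^m}$ as $m\to\infty$.

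For the arithmetic input, $g$ is a newform with $p\nmid N$, so the Hecke relations at $p$ give $a_g(p^m)=\sum_{i=0}^{m}\beta^{i}{\beta'}^{m-i}$ whenever $\beta\neq\beta'$ (which holds under both hypotheses), and hence the telescoping identities
\[
a_g(p^m)-\beta' a_g(p^{m-1})=\beta^{m},\qquad a_g(p^m)-\beta a_g(p^{m-1})={\beta'}^{m}.
\]
Moreover $\beta\beta'=\chi(p)p^{k-1}$ with $\ord_p(\chi(p))=0$, so $\ord_p(\beta)+\ord_p(\beta')=k-1$, and $\beta,\beta'$ are algebraic integers, so $\ord_p(\beta),\ord_p(\beta')\geq 0$. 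Under the hypothesis $\ord_p(\beta)<\ord_p(\beta')$ this forces $\ord_p(\beta)<\tfrac{k-1}{2}$; under $\beta'=-\beta$ it forces $\ord_p(\beta)=\ord_p(\beta')=\tfrac{k-1}{2}$. In both cases $1-k+\ord_p(\beta)<0$, while $1-k+\ord_p(\beta')<0$ precisely when $\ord_p(\beta')\neq k-1$ (using $\ord_p(\beta')\leq k-1$).

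Now I would read off the coefficients, using $E_{g|V(p)}=p^{1-k}E_g|V(p)$. The coefficient of $q^{p^m}$ in $E_{g-\beta' g|V(p)}=E_g-\beta' p^{1-k}E_g|V(p)$ is $p^{m(1-k)}\big(a_g(p^m)-\beta' a_g(p^{m-1})\big)=\big(p^{1-k}\beta\big)^{m}$, of valuation $m\big(1-k+\ord_p(\beta)\big)\to-\infty$; interchanging $\beta$ and $\beta'$ shows the coefficient of $q^{p^m}$ in $E_{g-\beta g|V(p)}$ is $\big(p^{1-k}\beta'\big)^{m}$, of valuation $m\big(1-k+\ord_p(\beta')\big)$, which tends to $-\infty$ exactly under the extra hypothesis $\ord_p(\beta')\neq k-1$. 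The coefficient of $q^{p^m}$ in $E_g$ is $p^{m(1-k)}a_g(p^m)$: when $\ord_p(\beta)<\ord_p(\beta')$ the summands $\beta^{m+1},{\beta'}^{m+1}$ have distinct valuations, so $\ord_p(a_g(p^m))=m\,\ord_p(\beta)$ and the valuation is $m\big(1-k+\ord_p(\beta)\big)\to-\infty$; when $\beta'=-\beta$ one instead restricts to even $m=2j$, where $a_g(p^{2j})=\beta^{2j}$ and the coefficient is $\big(p^{1-k}\beta\big)^{2j}$. The same argument applies to $E_{g|V(p)}$, whose coefficient of $q^{p^m}$ is $p^{m(1-k)}a_g(p^{m-1})$ (take odd $m$ when $\beta'=-\beta$). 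Finally, if $p || N$ then $a_g(p^m)=a_g(p)^m$ with $a_g(p)\neq 0$ and $0<\ord_p(a_g(p))<k-1$, a standard property of newform coefficients at a prime exactly dividing the level, so the coefficient of $q^{p^m}$ in $E_g$ is $\big(p^{1-k}a_g(p)\big)^{m}$, again of valuation tending to $-\infty$. In every case the denominators are unbounded, proving the lemma.

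The only step that is not purely formal is the claim, used in the first paragraph, that $p$-adic modular forms have bounded denominators --- equivalently, that a weakly holomorphic modular form of bounded weight, level, and pole order with algebraic Fourier coefficients has bounded $p$-adic denominators, which can also be seen through the comparison with Serre's theory noted after the definition --- together with the invoked estimate $0<\ord_p(a_g(p))<k-1$ for $p || N$. I expect this to be the main point requiring care; the rest is the elementary valuation bookkeeping sketched above.
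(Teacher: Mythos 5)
Your proof is correct and follows essentially the same route as the paper: first establish that $p$-adic modular forms have bounded denominators (via bounded denominators of weakly holomorphic forms), then exhibit coefficients at $q^{p^m}$ whose valuations tend to $-\infty$ using $\ord_p\big(a_g(p^m)\big)=m\,\ord_p(\beta)$ and the telescoping identities; the paper merely phrases the coefficient bound through $D^{k-1}$ while you read it off the Eichler integral directly, which is equivalent. One cosmetic slip: for $p\| N$ one has $\ord_p\left(a_g(p)\right)=\frac{k}{2}-1$, which equals $0$ when $k=2$, so your strict lower bound $0<\ord_p\left(a_g(p)\right)$ can fail --- but only the upper bound $<k-1$ is used, so nothing breaks.
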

\begin{proof}
Let $H$ be a $p$-adic modular form.  We first note that there exists a constant $A\geq 0$ such that $\ord_p\left(a_H(n)\right)\geq -A$ for every $n\in\Z$.  This follows easily from the fact that the coefficients of weakly holomorphic modular forms have bounded denominators, which can be concluded from the corresponding statement for holomorphic modular forms in Theorem 3.52 of \cite{Shimura}.  Hence for every $m\in \N$ we have
$$
\ord_p\left(a_{D^{k-1}(H)}\left(p^{m}\right)\right)\geq m(k-1)-A.
$$

Let $C\geq 0$ be given.  Using the fact that $g$ is a Hecke eigenform, the statements for $E_g$ and $E_{g|V(p)}$ follow by 
$$
a_{g|V(p)}\left(p^{2m+1}\right)=a_{g}\left(p^{2m}\right)=\sum_{\ell=0}^{2m} \beta^{\ell}{\beta'}^{2m-\ell}=\beta^{2m} \sum_{\ell=0}^{2m} \left(\frac{\beta'}{\beta}\right)^{\ell},
$$
which, since $\ord_p(\beta)<\ord_p(\beta')$ or $\beta'=-\beta$, has $p$-order $\ord_p\left(\beta^{2m}\right)\leq \frac{2m(k-1)}{2}<2m(k-1)-C$ for $m$ sufficiently large.

The equality $E_g - \beta' p^{1-k} E_{g}|V(p)= E_{g-\beta'g|V(p)}$ follows by the commutation relation 
\begin{equation}\label{VDeqn}
\left( D^{k-1}(f)\right)|V(p) =  p^{1-k} D^{k-1}\left(f|V(p)\right)
\end{equation}
between $D^{k-1}$ and the $V$-operator on a formal power series $f(q)=\sum_{n\gg -\infty} a_{f}(n)q^n$.  The lemma for $E_{g-\beta'g|V(p)}$ then follows by the fact that for $m\geq 0$ one has $a_{g-\beta'g|V(p)}\left(p^m\right)=\beta^m$ , which has $p$-order at most $\frac{m(k-1)}{2}<m(k-1)-C$.  One concludes the lemma for $E_{g-\beta g|V(p)}$ similarly by the fact that $a_{g-\beta g|V(p)}\left(p^m\right)={\beta'}^m$ and the assumption that $\ord_p\left(\beta'\right)<k-1$.  

Finally, when $p || N$ the $p^m$-th coefficient of $E_g$ is $a_g(p)^{m} p^{m(1-k)}$.  Since $\ord_p\left(a_g(p)\right)=\frac{k}{2}-1$ one again concludes the lemma in this case.
\end{proof}

In order to deduce a helpful equivalence relation which will determine whether our corrected series are $p$-adic modular forms, one defines the operator
\begin{equation}\label{Beqn}
B(p):=-\beta p^{1-k}\left(1- \beta' p^{1-k} V(p)\right)\left( 1-\beta^{-1}p^{k-1}U(p)\right).
\end{equation}
In order to determine the action of $B(p)$ on our corrected series $\FF_{\alpha}$ and $\FF_{\alpha,\delta}$, we will first determine the actions of $B(p)$ and $B(p)U(p)$ on the correction terms $E_g$ and $E_{g}-\beta'E_{g|V(p)}$.  This is established in the following lemma.
\begin{lemma}\label{Bannlemma}
The series $E_g$ is annihilated by $B(p)$.  Furthermore, $E_{g|V(p)}$ is annihilated by $B(p)U(p)$.
\end{lemma}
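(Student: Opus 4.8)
The plan is to reduce both assertions to two elementary inputs: that $g$ is a Hecke eigenform, so $g|_kT(p)=a_g(p)g$, and the formal identity $f|V(p)|U(p)=f$, valid for every formal power series $f$. (Throughout I use the convention $f|ST=(f|S)|T$.) The first move is to rewrite $B(p)$ as an operator on formal power series: multiplying out the two bracketed factors in \eqref{Beqn}, collapsing the cross term $V(p)U(p)$ to the identity, and simplifying the scalars by means of $\beta+\beta'=a_g(p)$ and $\beta\beta'=\chi(p)p^{k-1}$, one obtains
$$
B(p)=U(p)+\chi(p)p^{1-k}V(p)-p^{1-k}a_g(p)\,\mathrm{id}=T(p)-p^{1-k}a_g(p)\,\mathrm{id},
$$
where $T(p)$ denotes the weight $2-k$ Hecke operator at $p$, acting on $q$-expansions.

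Next I would show that $E_g$ is a Hecke eigenform of weight $2-k$ with eigenvalue $p^{1-k}a_g(p)$. Applying $D^{k-1}$ and invoking \eqref{Heckecommuteeqn} with $n=p$, together with $D^{k-1}(E_g)=g$, gives $p^{k-1}D^{k-1}(E_g|_{2-k}T(p))=(D^{k-1}E_g)|_kT(p)=g|_kT(p)=a_g(p)g$, hence $D^{k-1}(E_g|_{2-k}T(p))=p^{1-k}a_g(p)\,g=D^{k-1}(p^{1-k}a_g(p)E_g)$. Since $E_g$, and therefore $E_g|_{2-k}T(p)$, is supported on exponents $n\ge 1$, the operator $D^{k-1}$ is injective on it, so $E_g|_{2-k}T(p)=p^{1-k}a_g(p)E_g$. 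Combined with the rewriting of $B(p)$ above, this yields $E_g|B(p)=E_g|_{2-k}T(p)-p^{1-k}a_g(p)E_g=0$, which is the first assertion.

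For the second assertion, note first that directly from \eqref{Egdefeqn} one has $E_{g|V(p)}=p^{1-k}\,E_g|V(p)$. Using the description $B(p)=U(p)+\chi(p)p^{1-k}V(p)-p^{1-k}a_g(p)\,\mathrm{id}$ together with $f|V(p)|U(p)=f$ (and its consequence $f|V(p)|V(p)|U(p)=f|V(p)$), a one-line computation gives the operator identity $V(p)\,B(p)\,U(p)=B(p)$. Therefore
$$
E_{g|V(p)}\,|\,B(p)U(p)=p^{1-k}\,E_g\,|\,\bigl(V(p)B(p)U(p)\bigr)=p^{1-k}\,E_g|B(p)=0
$$
by the first assertion.

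The whole argument is formal; the only point requiring care is the non-commutativity of $U(p)$ and $V(p)$ and the bookkeeping of the powers of $p$ coming from the commutation relations between $D^{k-1}$ and $U(p),V(p)$ (cf.\ \eqref{VDeqn}). In fact the operator $B(p)$ is designed precisely so that these powers of $p$ cancel and one is left with $T(p)$ minus its eigenvalue on the Eichler integral $E_g$, so the sole substantive input is that $g$ is a Hecke eigenform. Alternatively, one can avoid the identity $V(p)B(p)U(p)=B(p)$ and instead expand $B(p)U(p)$ directly on $q$-expansions, again applying $D^{k-1}$ to clear the powers of $p$ and reducing to $g|_kT(p)=a_g(p)g$; this is slightly longer but conceptually the same.
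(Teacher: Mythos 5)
Your proof is correct. The first assertion is handled exactly as in the paper: expand $B(p)$ using $V(p)U(p)=\mathrm{id}$ and the relations $\beta+\beta'=a_g(p)$, $\beta\beta'=\chi(p)p^{k-1}$ to get $B(p)=T(p)-a_g(p)p^{1-k}$ in weight $2-k$, then push through $D^{k-1}$ via \eqref{Heckecommuteeqn} and use $g|_kT(p)=a_g(p)g$; your explicit remark about injectivity of $D^{k-1}$ on series supported on $n\ge 1$ just makes precise a step the paper leaves implicit. For the second assertion, however, you take a genuinely different route. The paper observes that, given the first assertion, it suffices to kill $E_{g-\beta'g|V(p)}$, then uses the factorization \eqref{BpUpeqn} of $B(p)U(p)$ into two commuting factors annihilating $U(p)$-eigenforms with eigenvalues $\beta p^{1-k}$ and $\beta'p^{1-k}$, and checks via \eqref{UDeqn} that $E_{g-\beta'g|V(p)}$ is such an eigenform with eigenvalue $\beta p^{1-k}$. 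You instead prove the purely formal operator identity $V(p)B(p)U(p)=B(p)$ (which is easily verified: the $V(p)U(p)$ and $V(p)^2U(p)$ terms collapse correctly) and combine it with $E_{g|V(p)}=p^{1-k}E_g|V(p)$ to reduce the second claim directly to the first. Your version is arguably cleaner: it never reintroduces $\beta,\beta'$ or any eigenform property in the second step, and it shows that the second assertion is a formal consequence of the first for \emph{any} series annihilated by $B(p)$. The paper's version has the compensating advantage that the eigenform decomposition of $B(p)U(p)$ and the identification of $E_{g-\beta'g|V(p)}$ as a $U(p)$-eigenform are reused later (e.g.\ in the proofs of Proposition \ref{padiclimprop} and Theorem \ref{genericthm}), so the authors get that bookkeeping done inside the lemma.
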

\begin{proof}
Since $V(p)U(p)$ acts like the identity, $\beta+\beta'=a_g(p)$ and $\beta\beta'=\chi(p)p^{k-1}$, one obtains that
\begin{equation}\label{Bprewriteeqn}
B(p)=U(p)+\chi(p)p^{1-k}V(p)-a_g(p) p^{1-k}.
\end{equation}
However, rewriting the right hand side in terms of the weight $2-k$ Hecke operator $T(p)$ and using the commutation relation \eqref{Heckecommuteeqn} between $T(p)$ and $D^{k-1}$ yields that
$$
D^{k-1}\left(E_g|B(p)\right) = p^{1-k} g | \big(T(p)-a_g(p)\big)=0.
$$
One concludes that $E_g|B(p)=0$, giving the first statement of the lemma.  

Since we have already seen that $E_g$ is annihilated by $B(p)$, the second statement is equivalent to the statement that $E_{g-\beta'g|V(p)}$ is annihilated by $B(p)U(p)$.  Now observe that 
\begin{equation} \label{BpUpeqn}
B(p)U(p)=\beta\beta' p^{2(1-k)}\left(1-{\beta'}^{-1}p^{k-1}U(p)\right)\left( 1-\beta^{-1}p^{k-1}U(p)\right).
\end{equation}
Thus $B(p)U(p)$ clearly annihilates any formal power series which is an eigenform under the $U(p)$-operator with eigenvalue $\beta p^{1-k}$ or $\beta'p^{1-k}$.  To finish the proof, we use the commutation relation 
\begin{equation}\label{UDeqn}
\left( D^{k-1}(f)\right)|U(p) = p^{k-1} D^{k-1}\left(f|U(p)\right)
\end{equation}
between $D^{k-1}$ and the $U$-operator, on a formal power series $f(q)=\sum_{n\gg -\infty} a_{f}(n)q^n$.  Together with the fact that $g-\beta'g|V(p)$ is an eigenform under the $U(p)$-operator with eigenvalue $\beta$, this yields 
$$
E_{g-\beta'g|V(p)}|U(p)=p^{1-k}E_{\left(g-\beta'g|V(p)\right)|U(p)} = \beta p^{1-k}E_{g-\beta'g|V(p)},
$$
and the claim follows.
\end{proof}
Recall the definitions of $\FF_{\alpha}$, $F_{\alpha}$, $\FF_{\alpha}^*$, $\FF_{\alpha,\delta}$, and $F_{\alpha,\delta}$ given in equations \eqref{FFadefeqn}, \eqref{Fadefeqn}, \eqref{FF*adefeqn}, \eqref{FFaddefeqn}, and \eqref{Faddefeqn}, respectively.  Furthermore, for $\alpha\in \AM$ and $\delta\in \Qp$, define
\begin{eqnarray}\label{FF*addefeqn}
\FF_{\alpha,\delta}^*&:=&\FF_{\alpha,\delta}| \left(1-\beta' p^{1-k} V(p)\right),\\
\label{F*addefeqn} F_{\alpha,\delta}^*&:=&D^{k-1}(\FF_{\alpha,\delta}^*)= F_{\alpha,\delta}|\left(1-\beta'V(p)\right),
\end{eqnarray}
which follows by the commutation relation \eqref{VDeqn}.  We will bootstrap from the following proposition in order to obtain Theorem \ref{genericthm}.  
\begin{proposition}\label{padiclimprop}
Let $\alpha\in \AM$ and $\delta\in \Qp$.  
\noindent
\begin{enumerate}
\item[]\hspace{-36pt}\textnormal{(1)}\hspace{21pt}The form $\FF_{\alpha,\delta}^*|U(p)$ is a $p$-adic modular form of weight $2-k$, level $pN$, and Nebentypus $\chi$ if and only if 
\begin{equation}\label{had*defeqn}
h_{\alpha,\delta}^{*}:=\lim_{m\to \infty} \left( \beta^{-m} F_{\alpha,\delta}^*|U\left(p^m\right)\right) =0.
\end{equation}
\item[]\hspace{-36pt}\textnormal{(2)}\hspace{21pt}If $\ord_p(\beta)<\ord_p(\beta')$, then $\FF_{\alpha}^*$ is a $p$-adic modular form of weight $2-k$, level $pN$, and Nebentypus $\chi$ if and only if 
\begin{equation}\label{hadefeqn}
h_{\alpha}:=\lim_{m\to \infty}\left(\beta^{-m} F_{\alpha}|U\left(p^m\right) \right) =0.
\end{equation}
\item[]\hspace{-36pt}\textnormal{(3)}\hspace{21pt}Finally, if $\ord_p(\beta')\neq k-1$, then $\FF_{\alpha,\delta}$ is a $p$-adic modular form of weight $2-k$, level $pN$, and Nebentypus $\chi$ if and only if 
\begin{equation}\label{haddefeqn}
h_{\alpha,\delta}:=\lim_{m\to\infty}\left( {\beta'}^{-m} F_{\alpha,\delta}|U\left(p^m\right)\right)=0.
\end{equation}
\end{enumerate}

\end{proposition}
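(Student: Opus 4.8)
The plan is to handle the three parts by a single device converting ``$G$ is a $p$-adic modular form'' into the vanishing of a normalized $U(p^m)$-limit. First I will record, for the relevant $G$, a twisted eigenvalue identity modulo weakly holomorphic forms. From \eqref{Heckeeqn} and $E_g|B(p)=0$ (Lemma \ref{Bannlemma}) one gets $\FF_\alpha|B(p)=R_p\in M_{2-k}^!(N,\chi)$, with coefficients in $K_g$ since $R_p=\FF_{a_M(1)}|B(p)$; dividing by the factor $1-\beta^{-1}p^{k-1}U(p)$ in \eqref{Beqn} yields $\FF_\alpha^*|U(p)=\beta p^{1-k}\FF_\alpha^*+R_p$, and applying $U(p)$ again gives the same shape for $\FF_{\alpha,\delta}^*|U(p)$, after one checks $\FF_{\alpha,\delta}^*|U(p)=\FF_\alpha^*|U(p)$ (true because $E_g-\beta E_{g|V(p)}$ is a $U(p)$-eigenform of eigenvalue $\beta'p^{1-k}$, so $1-\beta'p^{1-k}V(p)$ carries it into $\ker U(p)$). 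For part (3) I will instead use $\FF_{\alpha,\delta}|B(p)U(p)=R_p|U(p)$ — valid since $E_g-\beta E_{g|V(p)}$ is annihilated by $B(p)U(p)$, as in Lemma \ref{Bannlemma} — and divide by $1-\beta^{-1}p^{k-1}U(p)$ in \eqref{BpUpeqn}, getting $G|U(p)=\beta'p^{1-k}G-\beta^{-1}p^{k-1}R_p|U(p)$ for $G:=\FF_{\alpha,\delta}|(1-\beta^{-1}p^{k-1}U(p))$. In each case the twisting factor $\lambda p^{1-k}$, $\lambda\in\{\beta,\beta'\}$, has $\ord_p(\lambda p^{1-k})<0$: indeed $\ord_p(\beta)+\ord_p(\beta')=k-1$ forces $\ord_p(\beta)\le(k-1)/2$, and in part (3) the hypothesis $\ord_p(\beta')\ne k-1$ together with $\ord_p(\beta')\le k-1$ gives $\ord_p(\beta')<k-1$.

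Next I will prove the core lemma: if $G|U(p)=\lambda p^{1-k}G+W_0$ with $W_0\in M_{2-k}^!(pN,\chi)$ (algebraic coefficients) and $\ord_p(\lambda p^{1-k})<0$, then $G$ is a $p$-adic modular form of weight $2-k$ and level $pN$ if and only if $\mathcal{G}:=\lim_{m\to\infty}(\lambda p^{1-k})^{-m}G|U(p^m)=0$. Iterating,
$$
(\lambda p^{1-k})^{-m}G|U(p^m)=G+(\lambda p^{1-k})^{-1}\sum_{i=0}^{m-1}(\lambda p^{1-k})^{-i}W_0|U(p^i).
$$
Since $U(p)$ preserves $M_{2-k}^!(pN,\chi)$, whose elements have uniformly bounded denominators (Theorem 3.52 of \cite{Shimura}, as in the proof of Lemma \ref{uniquelemma}), while $\ord_p((\lambda p^{1-k})^{-i})=i((k-1)-\ord_p(\lambda))\to\infty$, the series converges $p$-adically; hence $\mathcal{G}$ exists and $\mathcal{G}-G$ is a $p$-adic modular form. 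If $\mathcal{G}=0$ then $G$ is a $p$-adic modular form; conversely, if $G$ is one, then its coefficients have $p$-order bounded below by a fixed constant (again from the proof of Lemma \ref{uniquelemma}), so those of $(\lambda p^{1-k})^{-m}G|U(p^m)$ have $p$-order $\to+\infty$, forcing $\mathcal{G}=0$.

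To extract the stated conditions I will apply $D^{k-1}$: by \eqref{VDeqn} and \eqref{UDeqn}, $D^{k-1}((\lambda p^{1-k})^{-m}G|U(p^m))=\lambda^{-m}(D^{k-1}G)|U(p^m)$, so $D^{k-1}\mathcal{G}$ is exactly the normalized limit in the statement, up to a nonzero scalar and an index shift, once one unwinds $F_\alpha^*=F_\alpha|(1-\beta'V(p))$ and $D^{k-1}G=F_{\alpha,\delta}|(1-\beta^{-1}U(p))$, and notes that $\FF_\alpha^*|U(p)$ is a $p$-adic modular form iff $\FF_\alpha^*$ is (they differ by $R_p$ up to the nonzero scalar $\beta p^{1-k}$). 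The passage from $h_\alpha^*$ to $h_\alpha$ in parts (1) and (2) uses $\beta\ne\beta'$ and $\ord_p(\beta)<\ord_p(\beta')$: these make the limit $h_\alpha=\lim_m\beta^{-m}F_\alpha|U(p^m)$ exist and give $h_\alpha^*=(1-\beta'/\beta)h_\alpha$, so $h_\alpha^*=0$ iff $h_\alpha=0$. Finally $\mathcal{G}$ has vanishing constant term, because its constant term is $\lim_m(\lambda p^{1-k})^{-m}a_G(0)=0$; since $D^{k-1}$ is injective on Laurent series with zero constant term, $\mathcal{G}=0$ iff $D^{k-1}\mathcal{G}=0$. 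This settles parts (1) and (2).

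Part (3) is the main obstacle. There $\FF_{\alpha,\delta}$ itself satisfies only the two-step relation $\FF_{\alpha,\delta}|(U(p)-\beta p^{1-k})(U(p)-\beta'p^{1-k})=R_p|U(p)$, so I will run the core lemma on each of the intermediate series $W:=\FF_{\alpha,\delta}|(U(p)-\beta p^{1-k})$ (near-eigenvalue $\beta'p^{1-k}$, legitimate precisely because $\ord_p(\beta')\ne k-1$) and $W':=\FF_{\alpha,\delta}|(U(p)-\beta'p^{1-k})$ (near-eigenvalue $\beta p^{1-k}$), and combine them via $\FF_{\alpha,\delta}=\frac{p^{k-1}}{\beta'-\beta}(W-W')$; applying $U(p)-\beta p^{1-k}$, resp.\ $U(p)-\beta'p^{1-k}$, recovers $W$ and $W'$ from $W-W'$, so $\FF_{\alpha,\delta}$ is a $p$-adic modular form iff both $W$ and $W'$ are. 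The crux is to see that the single condition $h_{\alpha,\delta}=0$ encodes both: since $\ord_p(\beta/\beta')\le 0$, the mere convergence of $h_{\alpha,\delta}=\lim_m\beta'^{-m}F_{\alpha,\delta}|U(p^m)$ already forces the ``$\beta$-direction'' ($W'$, equivalently the $U(p)$-eigencomponent of eigenvalue $\beta p^{1-k}$) to be $p$-adically trivial — any nonzero such component obstructs convergence of the normalized limit — and, granted that, $h_{\alpha,\delta}=0$ becomes equivalent to the ``$\beta'$-direction'' ($W$, equivalently $G=\FF_{\alpha,\delta}|(1-\beta^{-1}p^{k-1}U(p))$) being a $p$-adic modular form by the previous two steps applied to $G$. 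Keeping these two eigendirections separated, and checking that nothing is lost in the reconstruction $\FF_{\alpha,\delta}=\frac{p^{k-1}}{\beta'-\beta}(W-W')$, is the delicate point.
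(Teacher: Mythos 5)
Your proposal is correct and follows essentially the same route as the paper: the identities $\FF_{\alpha}|B(p)=R_p$ and $\FF_{\alpha,\delta}|B(p)U(p)=R_p|U(p)$, iteration of the resulting near-eigenvalue relations into a $p$-adically convergent geometric series of weakly holomorphic forms, the bounded-denominators argument forcing the normalized $U(p^m)$-limits to vanish, and descent to the stated conditions via the $D^{k-1}$ commutation relations. Your part (3) is phrased through the two eigencomponents $W,W'$ and the reconstruction $\FF_{\alpha,\delta}=\frac{p^{k-1}}{\beta'-\beta}(W-W')$, but since $W'=\FF_{\alpha,\delta}^*|U(p)$ this is the same decomposition the paper uses, with the same observation that convergence of the $\beta'$-normalized limit already kills the $\beta$-component.
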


\begin{proof}

Let a formal power series $H\in \Qp[[q,q^{-1}]]$ be given such that there exists a weakly holomorphic modular form $W\in M_{2-k}^{!}(N,\chi)$ with algebraic coefficients which satisfies
\begin{equation}\label{HBpeqn}
H|B(p) = W.
\end{equation}
Denote 
\begin{equation}\label{H*defeqn}
H^*:=H|\left(1-\beta'p^{1-k} V(p)\right),
\end{equation}
and plug in the definition \eqref{Beqn} of $B(p)$ to equation \eqref{HBpeqn}.  Then acting on both sides of equation \eqref{HBpeqn} by ${\displaystyle \sum_{\ell=0}^{m-1}} \beta^{-\ell}p^{\ell(k-1)} U\left(p^{\ell}\right)$ and taking the $p$-adic limit $m\to\infty$ gives
\begin{equation}\label{H*limeqn}
H^* = \lim_{m\to\infty} \left(\beta^{-m} p^{m(k-1)}H^*|U\left(p^m\right)\right) -\beta^{-1} p^{k-1}\sum_{\ell=0}^{\infty} \beta^{-\ell}p^{\ell(k-1)} W|U\left(p^{\ell}\right),
\end{equation}
with the sum on the right hand side converging by the fact that $W$ has bounded denominators.  Hence $H^*$ is a $p$-adic modular form (of weight $2-k$, level $pN$, and Nebentypus $\chi$, which we suppress hereafter) if and only if 
\begin{equation}\label{H*limpadeqn}
\lim_{m\to\infty} \left(\beta^{-m} p^{m(k-1)}H^*|U\left(p^m\right)\right)
\end{equation}
is a $p$-adic modular form.  We now note that \eqref{H*limpadeqn} is clearly an eigenform under the $U(p)$-operator with eigenvalue $\beta p^{1-k}$.  Hence for any $n\in\N$ and $r\in \N_0$, the $\left(np^r\right)$-th coefficient of \eqref{H*limpadeqn} equals its $n$-th coefficient times $\left( \beta p^{1-k}\right)^r$ and one concludes that the $n$-th coefficient must be zero if \eqref{H*limpadeqn} is a $p$-adic modular form since $p$-adic modular forms have bounded denominators.  Therefore \eqref{H*limpadeqn} is a $p$-adic modular form if and only if it is identically zero.  

Moreover, since $V(p)U(p)$ acts like the identity, we have that
\begin{equation}\label{H*toHeqn}
\lim_{m\to\infty} \left(\beta^{-m} p^{m(k-1)}H^*|U\left(p^m\right)\right) = \left(1-\frac{\beta'}{\beta}\right) \lim_{m\to\infty} \left(\beta^{-m} p^{m(k-1)}H|U\left(p^m\right)\right),
\end{equation}
whenever the right hand side limit exists.  

Next assume that $\widetilde{H}$ is a formal power series such that there exists a weakly holomorphic modular form $\widetilde{W}\in M_{2-k}^!(N,\chi)$ satisfying
$$
\widetilde{H}|B(p)U(p)=\widetilde{W}
$$
and $\ord_p\left(\beta'\right)\neq k-1$.  Defining $\widetilde{H}^*$ analogously to $H^*$ given in \eqref{H*defeqn}, we will now prove that $\widetilde{H}$ is a $p$-adic modular form if and only if 
\begin{equation}\label{H*padiceqn}
\lim_{m\to\infty} \left(\beta^{-m} p^{m(k-1)}\widetilde{H}^*|U\left(p^m\right)\right)
\end{equation}
and 
\begin{equation}\label{Hpadiceqn}
\lim_{m\to\infty}\left({\beta'}^{-m} p^{m(k-1)}\widetilde{H}|U\left(p^m\right)\right)
\end{equation}
are both $p$-adic modular forms.  Plugging in the definition \eqref{Beqn} of $B(p)$, by an argument analogous to that giving equation \eqref{H*limeqn}, one obtains
\begin{equation}\label{Htilde*limeqn}
\widetilde{H}^*|U(p) = \beta p^{1-k} \lim_{m\to\infty} \left(\beta^{-m} p^{m(k-1)}\widetilde{H}^*|U\left(p^{m}\right)\right) -\beta^{-1}p^{k-1} \sum_{\ell=0}^{\infty}\beta^{-\ell}p^{\ell(k-1)} \widetilde{W}|U\left(p^{\ell}\right).
\end{equation}
This shows that $\widetilde{H}^*|U(p)$ is a $p$-adic modular form if and only if \eqref{H*padiceqn} is a $p$-adic modular form (and furthermore identically zero).  Since $\widetilde{H}^*|U(p)$ is clearly a $p$-adic modular form whenever $\widetilde{H}$ is a $p$-adic modular form, one may assume without loss of generality that $\widetilde{H}^*|U(p)$ is a $p$-adic modular form since otherwise both sides of the desired equivalence are false.  Then acting by $U(p)$ on both sides of equation \eqref{H*defeqn} combined with the fact that $\widetilde{H}^*|U(p)$ has bounded denominators, one argues as in the proof of equation \eqref{H*limeqn} to obtain the equality
\begin{equation}\label{HH*evaleqn}
\widetilde{H}=\lim_{m\to\infty}\left({\beta'}^{-m} p^{m(k-1)}\widetilde{H}|U\left(p^m\right)\right)  - {\beta'}^{-1} p^{k-1}\sum_{\ell=0}^{\infty} {\beta'}^{-\ell}p^{\ell(k-1)} \widetilde{H}^*|U\left(p^{\ell+1}\right),
\end{equation}
with the sum converging since $\ord_p\left(\beta'\right)\neq k-1$.  Therefore, it follows that $\widetilde{H}$ is a $p$-adic modular form if and only if \eqref{Hpadiceqn} is a $p$-adic modular form.  By again noting that \eqref{Hpadiceqn} is an eigenform for the $U(p)$-operator with eigenvalue $\beta' p^{1-k}$ and $p$-adic modular forms have bounded denominators, one concludes that $\widetilde{H}$ is a $p$-adic modular form if and only if \eqref{Hpadiceqn} is identically zero.  Thus, we have established that $\widetilde{H}$ is a $p$-adic modular form if and only if \eqref{H*padiceqn} and \eqref{Hpadiceqn} are both identically zero.  However, if \eqref{Hpadiceqn} is identically zero, then it follows that \eqref{H*padiceqn} is identically zero, and hence the statement that $\widetilde{H}$ is a $p$-adic modular form is equivalent to \eqref{Hpadiceqn} equaling zero.

We next establish that the above criterion may be applied to $H=\FF_{\alpha}$ and $\widetilde{H}=\FF_{\alpha,\delta}$.  By equation \eqref{Bprewriteeqn} one sees that equation \eqref{Heckeeqn} for $n=p$ is simply the statement that 
\begin{equation}\label{RpBpeqn}
R_p = M| B(p)=M^+|B(p),
\end{equation}
where we have used the fact that $M^-$ is annihilated by $B(p)=T(p)-a_g(p)p^{1-k}$ (see Lemma 7.4 of \cite{BruinierOnoAnnals}).  But then by Lemma \ref{Bannlemma} and equation \eqref{RpBpeqn} one obtains
\begin{eqnarray*}
\FF_{\alpha}|B(p) &=& M^+|B(p) - \alpha E_g|B(p)=R_p,\\
\FF_{\alpha,\delta}|B(p)U(p)&=&R_p|U(p).
\end{eqnarray*}
Since $\FF_{\alpha}|B(p)$ is a weakly holomorphic modular form, if $\ord_p\left(\beta\right)<\ord_p\left(\beta'\right)$, then $\FF_{\alpha}^*$ is a $p$-adic modular form if and only if 
\begin{equation}\label{HHa*defeqn}
\HH_{\alpha}^*:=\left(\frac{\beta}{\beta-\beta'}\right)\lim_{m\to\infty}\left({\beta}^{-m} p^{m(k-1)}\FF_{\alpha}^* |U\left(p^m\right)\right)=0.
\end{equation}
Moreover, since $\FF_{\alpha,\delta}|B(p)U(p)$ is a weakly holomorphic modular form, equation \eqref{Htilde*limeqn} with $\widetilde{H}=\FF_{\alpha,\delta}$ shows that $\FF_{\alpha,\delta}^*|U(p)$ is a $p$-adic modular form if and only if 
\begin{equation}\label{HHad*defeqn}
\HH_{\alpha,\delta}^{*}:=\lim_{m\to\infty}\left({\beta}^{-m} p^{m(k-1)}\FF_{\alpha,\delta}^* |U\left(p^m\right)\right)=0.
\end{equation}
Furthermore, if $\ord_p\left(\beta'\right)\neq k-1$, then $\FF_{\alpha,\delta}$ is a $p$-adic modular form if and only if 
\begin{equation}\label{HHaddefeqn}
\HH_{\alpha,\delta}:=\lim_{m\to\infty}\left({\beta'}^{-m} p^{m(k-1)}\FF_{\alpha,\delta} |U\left(p^m\right)\right)=0.
\end{equation}
The first and third statements of the proposition now follow by noting that the commutation relation \eqref{UDeqn} between $D^{k-1}$ and the $U(p)$-operator yields $D^{k-1}\left(\HH_{\alpha,\delta}^{*}\right)=h_{\alpha,\delta}^{*}$ and $D^{k-1}\left(\HH_{\alpha,\delta}\right)=h_{\alpha,\delta}$.  The second statement follows by from the fact that $D^{k-1}\left(\HH_{\alpha}^*\right)=h_{\alpha}$ and the existence of the limit on the right hand side of equation \eqref{H*toHeqn}, given by Proposition 2.2 of \cite{PavelKentOno}, in the case when $\ord_p(\beta)<\ord_p(\beta')$.
\end{proof}

Based on Proposition \ref{padiclimprop}, it will be helpful to evaluate $\HH_{\alpha,0}^{*}$, $\HH_{\alpha}^*$, and $\HH_{\alpha,\delta}$.  We first note that for every $\alpha\in\AM$, the limit $\HH_{\alpha,0}^{*}$ exists by equation \eqref{H*limeqn}.  By equation \eqref{Heckeeqn} and the fact that $T(n)$ commutes with $U(p)$ whenever $n$ and $Np$ are relatively prime, one obtains
$$
\HH_{\alpha,0}^{*}|_{2-k}T(n) = n^{1-k} a_g(n) \HH_{\alpha,0}^{*} +\lim_{m\to\infty} \left(\beta^{-m}p^{m(k-1)} R_n^*|U\left(p^m\right)\right),
$$
where $R_n^*:=R_n|\left(1-\beta'p^{1-k} V(p)\right)$.  Moreover, $\lim_{m\to\infty} \left(\beta^{-m}p^{m(k-1)} R_n^*|U\left(p^m\right)\right)=0$ since $R_n^*$ has bounded denominators.  Since $\HH_{\alpha,0}^{*}$ is clearly an eigenform for the $U(p)$-operator with eigenvalue $\beta p^{1-k}$, one may recursively compute the coefficients of $\HH_{\alpha,0}^{*}$ using the $U(p)$-operator and the Hecke operators to establish that
\begin{equation}\label{H*evaleqn}
\HH_{\alpha,0}^{*}=L_{\alpha}^{*}E_{g-\beta'g|V(p)},
\end{equation}
where
\begin{equation}\label{La*defeqn}
L_{\alpha}^{*}:= \lim_{m\to\infty} \left(\beta^{-m}a_{F_{\alpha}^*}\left(p^m\right)\right) \in\Qp.
\end{equation}
Whenever $\ord_p\left(\beta\right)<\ord_p\left(\beta'\right)$, since the limit on the right hand side of equation \eqref{H*toHeqn} exists, one may rewrite equation \eqref{H*evaleqn} as 
\begin{equation}\label{Hevaleqn}
\HH_{\alpha}^*=L_{\alpha}E_{g-\beta'g|V(p)},
\end{equation}
with 
\begin{equation}\label{Ladefeqn}
L_{\alpha}:=\left(\frac{\beta}{\beta-\beta'}\right) L_{\alpha}^{*}= \lim_{m\to\infty} \left(\beta^{-m} a_{F_{\alpha}}\left(p^m\right)\right)\in\Qp.
\end{equation}
Now assume $\ord_p\left(\beta\right)<\ord_p\left(\beta'\right)\neq k-1$.  Note that since $\ord_p\left(\beta\right)<\ord_p\left(\beta'\right)$, if the limit $\HH_{\alpha,\delta}$ exists, then the limit \eqref{H*padiceqn}, with $\widetilde{H}=\FF_{\alpha,\delta}$, must be identically zero.  However, by equation \eqref{Htilde*limeqn}, the limit \eqref{H*padiceqn} exists if and only if $\FF_{\alpha,\delta}^*|U(p)$ is a $p$-adic modular form.  Combining this with equation \eqref{HH*evaleqn} implies that the limit $\HH_{\alpha,\delta}$ exists if and only if $\FF_{\alpha,\delta}^*|U(p)$ is a $p$-adic modular form.  For $(\alpha,\delta)$ given so that $\FF_{\alpha,\delta}^*|U(p)$ is a $p$-adic modular form, one may again use the Hecke operators to obtain that
\begin{equation}\label{Hadevaleqn}
\HH_{\alpha,\delta}=L_{\alpha,\delta} E_{g-\beta g|V(p)},
\end{equation}
where the first coefficient of $\HH_{\alpha,\delta}$ is 
\begin{equation}\label{Laddefeqn}
L_{\alpha,\delta}:=\lim_{m\to\infty} \left({\beta'}^{-m}a_{F_{\alpha,\delta}}\left(p^m\right)\right)\in \Qp.
\end{equation}
We are now ready to treat the generic case.  
\begin{proof}[Proof of Theorem \ref{genericthm}]
By Proposition \ref{padiclimprop} (1), the first part of Theorem \ref{genericthm} (1) is equivalent to showing that precisely one $\alpha\in\AM$ exists so that 
$$
D^{k-1}\left(\HH_{\alpha,0}^{*}\right)=h_{\alpha,0}^{*}=0.
$$
Fix $\alpha_0\in \AM$.  Equation \eqref{H*evaleqn} gives that
\begin{equation}\label{h*alpha0eqn}
h_{\alpha_0,0}^{*}=L_{\alpha_0}^{*} \left(g-\beta' g|V(p)\right),
\end{equation}
where $L_{\alpha_0}^{*}$ is defined in equation \eqref{La*defeqn}.  Let $\gamma\in \Qp$ be given and write $\alpha=\alpha_0+\gamma$.  Then by the definitions \eqref{had*defeqn} of $h_{\alpha,0}^{*}$ and \eqref{F*addefeqn} of $F_{\alpha,0}^{*}$, one has 
$$
h_{\alpha,0}^{*} = h_{\alpha_0,0}^{*} -\gamma \lim_{m\to\infty} \left(\beta^{-m} \left(g-\beta'g|V(p)\right)|U\left(p^{m}\right)\right).
$$
Since $g-\beta'g|V(p)$ is an eigenform under the $U(p)$-operator with eigenvalue $\beta$, equation \eqref{h*alpha0eqn} gives that
$$
h_{\alpha,0}^{*}= \left(L_{\alpha_0}^{*}-\gamma\right) \left(g-\beta' g|V(p)\right).
$$
Thus, $h_{\alpha,0}^{*}=0$ if and only if 
\begin{equation}\label{L0eqn}
\gamma = L_{\alpha_0}^{*}.
\end{equation}
Choosing $\alpha_0=a_M(1)$ above and noting that the first coefficient of $F_{a_M(1)}^*|U\left(p^m\right)$ is precisely $b_M^*\left(p^m\right)$ establishes equation \eqref{alpha*evalgeneqn}.  In the case where $\ord_p\left(\beta\right)<\ord_{p}\left(\beta'\right)$, equations \eqref{Ladefeqn} and \eqref{L0eqn} give that 
$$
\gamma=L_{\alpha_0}^{*}=\frac{\beta-\beta'}{\beta} L_{\alpha}.
$$

Equation \eqref{alphaevalgeneqn} is then established by evaluating $L_{\alpha}$ with the right hand side of equation \eqref{Ladefeqn} combined with the fact that $b_M\left(p^m\right)$ is the first coefficient of $F_{a_M(1)}|U(p^m)$.  This concludes the proof of Theorem \ref{genericthm} (1). 

We now turn to the proof of Theorem \ref{genericthm} (2).  Since $\ord_p(\beta')\neq k-1$, we may use Proposition \ref{padiclimprop} (3), which states that $\FF_{\alpha,\delta}$ is a $p$-adic modular form if and only if $D^{k-1}\left(\HH_{\alpha,\delta}\right)=h_{\alpha,\delta}=0.$  We begin by taking $\alpha_1$ to be the unique choice from above so that $h_{\alpha_1}=0$ and assume that $h_{\alpha,\delta}=0$.  Since $g-\beta g|V(p)$ is an eigenform under the $U(p)$-operator with eigenvalue $\beta'$ and $\ord_p(\beta)<\ord_p(\beta')$, we have that
$$
\lim_{m\to\infty} \left(\beta^{-m} \left(g-\beta g|V(p)\right)|U\left(p^{m}\right)\right)=0.
$$
Therefore, 
$$
\lim_{m\to\infty}\left(\beta^{-m} F_{\alpha,\delta}|U\left(p^m\right)\right)= \lim_{m\to\infty}\left(\beta^{-m} F_{\alpha}|U\left(p^m\right)\right),
$$
and it follows that $\alpha=\alpha_1$.  Indeed, if one has 
$$
\lim_{m\to\infty}\left(\beta^{-m} F_{\alpha,\delta}|U\left(p^m\right)\right)\neq 0,
$$
then the fact that $\ord_p(\beta')>\ord_p(\beta)$ implies that the limit $h_{\alpha,\delta}$ does not even exist.  Because $\FF_{\alpha_1}^*$ is a $p$-adic modular form by Proposition \ref{padiclimprop} (2), equations \eqref{HH*evaleqn} and \eqref{RpBpeqn} with $\widetilde{H}=\FF_{\alpha_1,0}$ imply that the limit $h_{\alpha_1,0}$ exists.  Since $g-\beta g|V(p)$ has eigenvalue $\beta'$ under the $U(p)$-operator, equations \eqref{Hadevaleqn} and \eqref{Laddefeqn} give that
$$
h_{\alpha_1,\delta} = \left(L_{\alpha_1,0}- \delta \right) \left(g-\beta g|V(p)\right),
$$
for $L_{\alpha_1,0}$ defined in equation \eqref{Laddefeqn}.
It again follows that this limit is zero if and only if $\delta = L_{\alpha_1,0}.$
\end{proof}

\section{Shadows with Complex Multiplication}\label{CMSection}

For this section we assume that $g$ has CM.  We will use the following lemma, which follows by a direct calculation and the fact that in this case the coefficients of $M^+$ are contained in $K_g(\zeta)$ for some root of unity $\zeta$ by Theorem 1.3 of \cite{BruinierOnoRhoades}.  For a Dirichlet character $\chi$ and a formal power series $H(q)={\displaystyle \sum_{n\gg -\infty}} a(n) q^n$ we define $H(q)$ \begin{it}twisted by $\chi$\end{it} as 
$$
\left(H\otimes \chi\right)(q) :=\sum_{n\gg -\infty}a(n)\chi(n)q^n.
$$
Furthermore, for a quadratic field $K$ we denote the character associated to $K$ by $\chi_K$.

\begin{lemma}\label{weakhollemma}
Let $M\in H_{2-k}(N,\chi)$ be good for $g$, where $g$ is a cusp form with CM by $K=\Q(\sqrt{-D})$.  Then the twist
$$
R:=\frac{1}{2}\left(M+M\otimes \chi_K\right)\otimes \chi_K
$$
is a weakly holomorphic modular form.  In particular, there exists a weakly holomorphic modular form $R$ with coefficients in $K_g(\zeta)$, where $\zeta$ is a primitive $DN$-th root of unity, so that whenever $\chi_K(n)= 1$ the coefficients $a_M(n)$ and $a_R(n)$ are equal and otherwise we have $a_R(n)=0$.
\end{lemma}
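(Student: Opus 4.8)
The plan is to compute the Fourier expansion of $R$ directly, to show that its non-holomorphic part is forced to vanish by the CM hypothesis, and then to invoke the standard fact that a harmonic weak Maass form which is holomorphic on $\H$ is weakly holomorphic.

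First I would unwind the definition of $R$ on the level of Fourier expansions. Twisting a (not necessarily holomorphic) $q$-series by a Dirichlet character multiplies the Fourier coefficient at index $m$ by the value of that character at $m$. Applying this to $M=M^{+}+M^{-}$, with $M^{+}=\sum_{n\gg-\infty}a_{M}(n)q^{n}$ and $M^{-}=\sum_{n>0}a_{M}^{-}(-n)\Gamma(1-k;4\pi ny)q^{-n}$, and using that $\chi_{K}(-1)=-1$ since $K$ is imaginary quadratic, one finds that the holomorphic part of $R$ equals $\tfrac12\sum_{n}a_{M}(n)\bigl(\chi_{K}(n)+\chi_{K}(n)^{2}\bigr)q^{n}=\sum_{\chi_{K}(n)=1}a_{M}(n)q^{n}$, while the non-holomorphic part of $R$ equals $\tfrac12\sum_{n>0}a_{M}^{-}(-n)\bigl(-\chi_{K}(n)+\chi_{K}(n)^{2}\bigr)\Gamma(1-k;4\pi ny)q^{-n}=\sum_{\chi_{K}(n)=-1}a_{M}^{-}(-n)\Gamma(1-k;4\pi ny)q^{-n}$; here one uses that $\chi_{K}(n)+\chi_{K}(n)^{2}$ is $2$ when $\chi_{K}(n)=1$ and $0$ otherwise, and $-\chi_{K}(n)+\chi_{K}(n)^{2}$ is $2$ when $\chi_{K}(n)=-1$ and $0$ otherwise, so that the ramified indices (where $\chi_{K}(n)=0$) contribute nothing to either part. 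This already gives the asserted relation between $a_{R}(n)$ and $a_{M}(n)$, once $R$ is known to be weakly holomorphic.

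Second, I would kill the non-holomorphic part using condition (iii) of ``good for $g$''. Since $\xi_{2-k}(M)=\|g\|^{-2}g$, a direct evaluation of $\xi_{2-k}$ on the non-holomorphic part of $M$ shows that $a_{M}^{-}(-n)$ is a fixed nonzero multiple of $\overline{a_{g}(n)}$; in particular $a_{M}^{-}(-n)=0$ whenever $a_{g}(n)=0$. As $g$ has CM by $K$, we have $g\otimes\chi_{K}=g$, hence $a_{g}(n)=0$ for every $n$ with $\chi_{K}(n)=-1$. Therefore every coefficient occurring in the non-holomorphic part of $R$ vanishes, so $R$ is holomorphic on $\H$. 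Now $R$ is itself a harmonic weak Maass form of weight $2-k$ on some $\Gamma_{0}(N')$ with $N'$ divisible only by primes dividing $DN$, because twisting a harmonic weak Maass form by a Dirichlet character again yields one (of larger level, with Nebentypus multiplied by the square of the character) and $R$ is a finite $\Q$-linear combination of $M$, $M\otimes\chi_{K}$, and $(M\otimes\chi_{K})\otimes\chi_{K}$, all transforming under a common group with a common Nebentypus. A harmonic weak Maass form that is holomorphic on $\H$ and has at most linear exponential growth at every cusp has a Fourier expansion of the shape $\sum_{n\gg-\infty}a_{R}(n)q^{n}$ at each cusp, hence is a weakly holomorphic modular form. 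For the statement about the coefficient field, Theorem 1.3 of \cite{BruinierOnoRhoades} gives $a_{M}(n)\in K_{g}(\zeta)$ for a root of unity $\zeta$, and since each $a_{R}(n)$ is either $0$ or $a_{M}(n)$ the same holds for $R$; tracking the level $N'$ through that theorem identifies $\zeta$ with a primitive $DN$-th root of unity.

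The step I expect to be the main obstacle is the identification $a_{M}^{-}(-n)=(\text{const})\cdot\overline{a_{g}(n)}$: one must carefully match the normalization in the formula for $\xi_{2-k}$ acting on $\Gamma(1-k;4\pi ny)q^{-n}$ with the normalization $\xi_{2-k}(M)=\|g\|^{-2}g$ from the definition of ``good for $g$''. Once this is in hand, the remaining points --- the three-case bookkeeping for $\chi_{K}(n)\in\{1,-1,0\}$, the level and Nebentypus tracking through the twists, and pinning down $\zeta$ as a primitive $DN$-th root of unity --- are routine.
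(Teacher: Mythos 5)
Your proposal is correct and is precisely the ``direct calculation'' that the paper invokes without writing out: the double twist kills the $\chi_K(n)\neq 1$ terms of $M^+$ and, because $\chi_K(-1)=-1$, isolates the $\chi_K(n)=-1$ terms of $M^-$, which vanish since the shadow is a CM form; the coefficient field then comes from Theorem 1.3 of Bruinier--Ono--Rhoades exactly as the paper states. The only nitpick is that the multiple relating $a_M^-(-n)$ to $\overline{a_g(n)}$ depends on $n$ (it involves $n^{1-k}$), but all you use is that it is nonzero, so nothing is affected.
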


We are now ready to prove Theorem \ref{splitthm}.
\begin{proof}[Proof of Theorem \ref{splitthm}]
Let $p\nmid N$ be split in $\Op$.  By equation \eqref{L0eqn}, the claim that $\alpha=0$ is the unique choice from Theorem \ref{genericthm} (1) is equivalent to showing that $L_{0}=0$.  However, by Lemma \ref{weakhollemma} we know that 
$$
a_{F_0}\left(p^m\right) = a_{D^{k-1}(M)}\left(p^m\right)=a_{D^{k-1}(R)}\left(p^m\right).
$$
Since $R$ is a weakly holomorphic modular form with algebraic coefficients, Proposition 2.1 of \cite{PavelKentOno} implies that there exists an constant $A\geq 0$, depending only on $R$, such that 
$$
\ord_p\left(a_{D^{k-1}(R)}\left(p^m\right)\right)\geq m(k-1)-A,
$$
while $\ord_p(\beta^m)=0$, and it follows that $L_0=0$.
\end{proof}

\begin{proof}[Proof of Proposition \ref{inertprop}]
Let $p\nmid N$ be inert in $\Op$.  By Theorem 1.3 of \cite{BruinierOnoRhoades}, the coefficients of $M^+$ are contained in $K_g(\zeta)$, where $\zeta$ is a primitive $DN$-th root of unity.  But $K_g(\zeta)\subseteq \overline{\Q}\hookrightarrow \Qp$ so that $\widetilde{\FF}_{\alpha}$, as defined in \eqref{FFtadefeqn}, clearly has coefficients in $\Qp$ for every $\alpha\in \Qp$.  We will first show that the $p$-adic limit
\begin{equation}\label{betainerteqn}
W_{\alpha}:= \lim_{m\to\infty} \left(\beta^{-2m}\widetilde{F}_{\alpha}|U\left(p^{2m+1}\right)\right)
\end{equation}
is an element of $\Qp[[q]]$.  By Proposition 2.3 of \cite{PavelKentOno}, limit \eqref{betainerteqn} exists for $\alpha=0$.  Since $g$ is an eigenform under the Hecke operator $T(p)$ with eigenvalue $a_g(p)=0$ and $-\beta^2=\chi(p)p^{k-1}$, one obtains that
\begin{equation}\label{gUpeqn}
g|U(p)=\beta^2 g|V(p).
\end{equation}
Iterating $U(p)$ in equation \eqref{gUpeqn} together with the fact that $g|V(p)|U(p)=g$ gives that
\begin{equation}\label{inertpropevaleqn}
\beta^{-2m} g|V(p)|U\left(p^{2m+1}\right) = g.
\end{equation}
Therefore, the limit $W_{\alpha}$ exists.  Its first coefficient is given by 
$$
\widetilde{L}_{\alpha}:=a_{W_{\alpha}}(1)= \lim_{m\to\infty} \left(\beta^{-2m} a_{\widetilde{F}_{\alpha}}\left(p^{2m+1}\right)\right).
$$
Clearly by evaluating the first coefficient for $W_0$ and determining the linear dependence on $\alpha$ of the first coefficient by equation \eqref{inertpropevaleqn}, one obtains
\begin{equation}\label{Llineqn}
\widetilde{L}_{\alpha}=\widetilde{L}_0 - \alpha.
\end{equation}

Choosing $\delta:=-\frac{\alpha}{\beta}$ and $\widetilde{\alpha}:=-\delta$ gives $F_{\widetilde{\alpha},\delta}=\widetilde{F}_{\alpha}$.  Whenever $(n,pN)=1$, multiplying by $\beta^{-2m}$ and acting by $U\left(p^{2m+1}\right)$ on both sides of equation \eqref{rnadeqn} hence yields the equation
\begin{equation}\label{Ftilderneqn}
\beta^{-2m}\widetilde{F}_{\alpha}|U\left(p^{2m+1}\right) | T(n) = a_g(n)\beta^{-2m} \widetilde{F}_{\alpha}|U\left(p^{2m+1}\right) + \beta^{-2m}r_n|U\left(p^{2m+1}\right).
\end{equation}
Since $r_n\in D^{k-1}\left(M_{2-k}^{!}(N,\chi)\right)$, Proposition 2.1 of \cite{PavelKentOno} implies that 
$$
\lim_{m\to\infty}\left(\beta^{-2m}r_n|U\left(p^{2m+1}\right)\right) =0.
$$
Hence $W_{\alpha}|T(n)=a_g(n)W_{\alpha}$.  Since $W_{\alpha}|U\left(p^2\right)=\beta^2 W_{\alpha}$, inductively computing the coefficients yields that
$$
W_{\alpha}=\widetilde{L}_{\alpha} g +A_p g|V(p),
$$
where $A_p:=a_{W_{\alpha}}(p)$.  To conclude the proposition, it remains to show that $A_p=0$ to establish that  
\begin{equation}\label{Wevaleqn}
W_{\alpha}= \widetilde{L}_{\alpha} g,
\end{equation}
and when $\widetilde{L}_{\alpha}\neq 0$ (namely, precisely when $\alpha\neq \widetilde{L}_0$), the limit in equation \eqref{inertpropeqn} becomes $\widetilde{L}_{\alpha}^{-1} W_{\alpha}=g$.

To show that $A_p=0$, we first note that equation \eqref{inertpropevaleqn} implies that 
\begin{eqnarray*}
A_p&=&\lim_{m\to\infty} \left(\beta^{-2m}a_{D^{k-1}(M)}\left(p^{2m+2}\right)-\alpha \beta^{-2m}a_g\left(p^{2m+1}\right)\right)\\
&=&\lim_{m\to\infty} \left(\beta^{-2m}a_{D^{k-1}(M)}\left(p^{2m+2}\right)-\alpha a_g(p)\right),
\end{eqnarray*}
but $a_g(p)=0$ because $p$ is inert, and hence $A_p$ is independent of $\alpha$.  
Since $\chi_K\left(p^{2m+2}\right)=1$, we know by Lemma \ref{weakhollemma} that 
$$
A_p=\lim_{m\to\infty} \left(\beta^{-2m}a_{D^{k-1}(M)}\left(p^{2m+2}\right)\right)=\lim_{m\to\infty}\left(\beta^{-2m}a_{D^{k-1}(R)}\left(p^{2m+2}\right)\right).
$$
By Proposition 2.1 of \cite{PavelKentOno}, there exists a constant $A\geq 0$ such that $a_{D^{k-1}(R)}\left(p^{2m+2}\right)$ has $p$-order at least $(2m+2)(k-1)-A$, while $\ord_p\left(\beta^{2m}\right) = m(k-1)$, and it follows that $A_p=0$.  This completes the proof of Proposition \ref{inertprop}.

\end{proof}

We now prove Theorem \ref{inertthm}.

\begin{proof}[Proof of Theorem \ref{inertthm}]
Since $g$ has CM by $K$ and $p$ is inert in $\Op$, one has $\beta'=-\beta$.  By Proposition \ref{padiclimprop} (3), $\FF_{\alpha,\delta}$, as defined in \eqref{FFaddefeqn}, is a $p$-adic modular form of weight $2-k$, level $pN$ and Nebentypus $\chi$ if and only if $h_{\alpha,\delta}=0$.  Choose $\delta:=-\frac{\alpha}{\beta}$ and $\widetilde{\alpha}:=-\delta$, so that $\FF_{\widetilde{\alpha},\delta}=\widetilde{\FF}_{\alpha}$.  Hence $h_{\widetilde{\alpha},\delta}=0$ if and only if $W_{\alpha}=0$, where $W_{\alpha}$ was defined in equation \eqref{betainerteqn}.  However, combining equations \eqref{Llineqn} and \eqref{Wevaleqn}, one see that $W_{\alpha}=0$ if and only if 
$$
\alpha=\widetilde{L}_0=\lim_{m\to\infty} \left(\beta^{-2m} a_{\widetilde{F}_{0}}\left(p^{2m+1}\right)\right).
$$
Thus we have shown that $\widetilde{\FF}_{\alpha}$ is a $p$-adic modular form if and only if $\alpha=\widetilde{L}_0$, which is the statement of the theorem.
\end{proof}

We conclude with the case when $p|N$, and in particular the case when $g$ has CM and $p$ is a ramified prime.  We first show that $E_g$ is a $p$-adic modular form.
\begin{proposition}\label{Egclassicalprop}
If $p\mid N$ and $a_g(p)=0$, then $E_g$ is a $p$-adic modular form of level $N$ and Nebentypus $\chi$.
\end{proposition}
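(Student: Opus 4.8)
The plan is to exhibit $E_g$ as a $p$-adic limit of the series $D^{M}(g)$ for suitably chosen large positive integers $M$; the key feature of the present situation is that the Fourier coefficients of $g$, hence of $E_g$, are supported away from the multiples of $p$. First I would record this support property. Since $g$ is a newform of level $N$ with $p\mid N$, it is an eigenform of $U(p)$ with eigenvalue $a_g(p)$, so the hypothesis $a_g(p)=0$ gives $g|U(p)=0$; comparing $q$-expansions, this says exactly that $a_g(n)=0$ whenever $p\mid n$. (Equivalently, $\chi(p)=0$ forces $B(p)=U(p)$ in \eqref{Bprewriteeqn}, so $E_g|U(p)=0$ by Lemma \ref{Bannlemma}.) Hence, as formal $q$-series,
\[
g=\sum_{p\nmid n}a_g(n)\,q^n,\qquad E_g=\sum_{p\nmid n}n^{1-k}a_g(n)\,q^n .
\]

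Next I would interpolate the exponent. Fix $m\in\N$ and choose a positive integer $M=M_m$ with $M\equiv 1-k\pmod{(p-1)p^{m-1}}$ (for $p=2$, take instead $M$ in the residue class of $1-k$ modulo the exponent of $(\Z/2^m\Z)^{\times}$). For $p\nmid n$ the unit $n$ satisfies $n^{(p-1)p^{m-1}}\equiv1\pmod{p^m}$, so $n^{M}\equiv n^{1-k}\pmod{p^m}$; since the $a_g(n)$ are $p$-adic integers this gives
\[
E_g\;\equiv\;\sum_{n}n^{M}a_g(n)\,q^n\;=\;D^{M_m}(g)\pmod{p^m}.
\]
It therefore suffices to show that each $D^{M_m}(g)$ is a $p$-adic modular form of level $N$ and Nebentypus $\chi$: a weakly holomorphic modular form of level $N$ and Nebentypus $\chi$ approximating $D^{M_m}(g)$ modulo $p^m$ then approximates $E_g$ modulo $p^m$, and $m$ is arbitrary.

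For this last claim I would use the classical structure of quasimodular forms together with the $p$-adic modularity of $E_2$. Writing $k_f$ for the weight of $f$, the identity $Df-\frac{k_f}{12}E_2 f\in M^!_{k_f+2}(N,\chi)$ (for $f\in M^!_{k_f}(N,\chi)$ with algebraic coefficients) and the Ramanujan relations for $DE_2$, $DE_4$, $DE_6$ show that the ring generated over the weakly holomorphic modular forms of level $N$, Nebentypus $\chi$ and algebraic coefficients by $E_2$ is stable under $D$; iterating, $D^{M_m}(g)=\sum_{i=0}^{M_m}E_2^{\,i}h_i$ with $h_i\in M_{k+2M_m-2i}(N,\chi)$ of algebraic coefficients. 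By Serre's theory \cite{Serre}, $E_2$ is a $p$-adic modular form of level $1$ (for $p\ge5$ one may take $E_2\equiv E_{w_m}\pmod{p^m}$ with $w_m\equiv2\pmod{(p-1)p^{m-1}}$ large; the primes $2,3$ are treated as in \cite{Serre}, where the hypothesis $p\mid N$ gives room to work with $E_2-pE_2|V(p)\in M_2(\Gamma_0(p))\subseteq M_2(\Gamma_0(N))$). Replacing $E_2$ by a classical level-$1$ approximant $\phi_m$ of weight $w_m$ and multiplying each summand $E_2^{\,i}h_i$ by a power of $E_{p-1}$ congruent to $1\pmod{p^m}$ (a suitable monomial in $E_4,E_6$ when $p\le3$) so as to bring all summands to a common weight, one obtains a single classical modular form of level $N$ and Nebentypus $\chi$ congruent to $D^{M_m}(g)$ modulo a power of $p$ growing with $m$; hence $D^{M_m}(g)$, and therefore $E_g$, is a $p$-adic modular form of level $N$ and Nebentypus $\chi$.

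The main obstacle is concentrated entirely in this last step: $D^{M_m}(g)$ is a priori only quasimodular, and promoting it to a genuine $p$-adic modular form of level \emph{exactly} $N$ --- with single-weight classical approximants, as the definition demands --- hinges on $E_2$ being a $p$-adic modular form, which is the one delicate point of Serre's theory, subtle at $p=2$ and $p=3$; there the hypothesis $p\mid N$ is precisely what is needed. The earlier steps are routine once the vanishing $a_g(n)=0$ for $p\mid n$ is in hand.
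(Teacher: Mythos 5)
Your argument is essentially the paper's own proof: both use $a_g(pn)=0$ to restrict to $p\nmid n$, apply Euler's theorem to replace $n^{1-k}$ by a large positive exponent so that $E_g\equiv D^{\ell_m}(g)\pmod{p^m}$, and then invoke the fact that $D^{\ell}$ of a modular form is a $p$-adic modular form (which the paper simply cites from Serre, and which you correctly unpack via $E_2$ and quasimodularity). Note also that the proposition only asserts $E_g$ is a $p$-adic modular form of level $N$ and Nebentypus $\chi$ without fixed weight, so the approximants $H_m$ are allowed to have varying weights $\ell_m$, which slightly eases the weight-juggling you describe in your final step.
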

\begin{proof}
Since $a_g(p)=0$ and $p\mid N$, one has $a_g(pn)=0$ for every $n\in \N$.  For $n$ relatively prime to $p$, Euler's Theorem states that for any $C\in \N$, we have
$$
n^{C\left(p^{m}-p^{m-1}\right)}\equiv 1 \pmod{p^m}.
$$
Choosing $C$ large enough to satisfy $\ell_m := C(p^m-p^{m-1})-(k-1)>0$ gives, using definition \eqref{Egdefeqn}, that
$$
E_g(z)\equiv \sum_{n\geq 1} a_g(n) n^{C(p^m-p^{m-1})-(k-1)} q^n=D^{\ell_m}(g) \pmod{p^m}.
$$
But the image under $D^{\ell_m}$ of a modular form is itself a $p$-adic modular form (see \cite{Serre72} for the level 1 case), and the proposition follows.
\end{proof}

In order to prove Theorem \ref{divisorsNthm} (1), we will use an argument similar to that given in the proof of Proposition \ref{Egclassicalprop} to show that $\FF_{\alpha}$ is a $p$-adic modular form of level $N$ and Nebentypus $\chi$.
\begin{proof}[Proof of Theorem \ref{divisorsNthm}]
We first assume $a_g(p)=0$ in order to prove Theorem \ref{divisorsNthm} (1).  Since $g|U(p)=0$, one concludes that $g\otimes \chi_p^2=g$.  It follows that $M^+\otimes \chi_p^2$ has shadow $\frac{g}{\| g\|}$ and thus differs from $M^+$ by a weakly holomorphic modular form.  Hence for every $\alpha \in \AM$ one has
\begin{equation}\label{FFdiffeqn}
\FF_{\alpha}\otimes \chi_p^2 - \FF_{\alpha} = M^+\otimes \chi_p^2 - M^+,
\end{equation}
which is a weakly holomorphic modular form.  In particular, for the choice $\alpha=a_M(1)$, the left hand side of equation \eqref{FFdiffeqn} has coefficients in $K_g$ since $\FF_{a_M(1)}$ has its coefficients in $K_g$ by Theorem 1.1 of \cite{PavelKentOno}.  Thus the weakly holomorphic modular form $M^+\otimes \chi_p^2 - M^+$ has coefficients in $K_g$.

Moreover $F_{\alpha}\otimes \chi_p^2 = D^{k-1}\left(\FF_{\alpha}\otimes\chi_p^2\right)$ is a weakly holomorphic modular form.  We then note that for every $n$, the $(pn)$-th coefficient of $\FF_{\alpha}\otimes \chi_p^2$ clearly equals zero.  But then we may argue as in the proof of Proposition \ref{Egclassicalprop}, using Euler's Theorem to approximate $\FF_{\alpha}\otimes \chi_p^2$ by $D^{\ell_m}\left( F_{\alpha}\otimes\chi_p^2 \right)$.  Similarly to the case for holomorphic modular forms, the image of any weakly holomorphic modular form under $D^{\ell_m}$ is a $p$-adic modular form, which can be shown by using the weight 2 Eisenstein series $E_2$ and then noting that $E_2$ is a $p$-adic modular form.  Thus $\FF_{\alpha}\otimes \chi_p^2$ is a $p$-adic modular form which differs from $\FF_{\alpha}$ by a weakly holomorphic modular form with coefficients in $K_g\subseteq\Qp$, and it follows that $\FF_{\alpha}$ is also a $p$-adic modular form.  This concludes the proof of Theorem \ref{divisorsNthm} (1).

To show Theorem \ref{divisorsNthm} (2), we first recall (cf. \cite{OnoBook}, p. 29) that when $a_g(p)\neq 0$ one has 
$$
g|U(p)=a_g(p)g = -\lambda_p p^{\frac{k}{2}-1} g,
$$
where $\lambda_p=\pm 1$ is the eigenvalue of the Atkin-Lehner involution.  Therefore $\ord_p\left(a_g(p)\right)=\frac{k}{2}-1$.  

Therefore, since $M|U(p)$ and $a_g(p) p^{1-k}M$ have the same non-holomorphic part, we have that
\begin{equation}\label{Uopweakeqn}
M^+|U(p) = a_g(p) p^{1-k}M^+ +R_p
\end{equation}
for some $R_p\in M_{2-k}^{!}(N,\chi)$.  Rewriting equation \eqref{Uopweakeqn} and noting that 
$$
E_g|\left(1-a_g(p)^{-1}p^{k-1} U(p)\right)=0,
$$
one obtains that
\begin{equation}\label{RpUeqn}
R_p =  -a_g(p)p^{1-k} \FF_{\alpha}|\left(1-a_g(p)^{-1}p^{k-1} U(p)\right).
\end{equation}
We then act on both sides of equation \eqref{RpUeqn} by $\sum_{\ell=0}^{m-1} \frac{p^{\ell(k-1)}}{a_g(p)^{\ell}} U\left(p^{\ell}\right)$ and take the limit $m\to\infty$.  Since $R_p$ has bounded denominators and $\ord_p\left(a_g(p)\right)=\frac{k}{2}-1<k-1$, one obtains that 
\begin{equation}\label{pdivNMevaleqn}
\FF_{\alpha} = \lim_{m\to\infty}\left( a_g(p)^{-m} p^{m(k-1)}\FF_{\alpha}|U\left(p^m\right)\right)- a_g(p)^{-1}p^{k-1} \sum_{\ell=0}^{\infty}a_g(p)^{-\ell} p^{\ell(k-1)} R_p|U\left(p^{\ell}\right).
\end{equation}
It follows that $\FF_{\alpha}$ is a $p$-adic modular form of weight $2-k$, level $N$, and Nebentypus $\chi$ if and only if 
\begin{equation}\label{pdivNlimeqn}
G_{\alpha}:=\lim_{m\to\infty}\left( a_g(p)^{-m}p^{m(k-1)} \FF_{\alpha}|U\left(p^m\right)\right)
\end{equation}
is a $p$-adic modular form.  
However, the limit \eqref{pdivNlimeqn} exists by equation \eqref{pdivNMevaleqn} and, analogously to the proof of equation \eqref{H*evaleqn}, one can check easily that $G_{\alpha}$ is a Hecke eigenform for all of the Hecke operators, giving, by recursively computing coefficients, that 
$$
G_{\alpha} = \Lambda_{\alpha} E_g,
$$
where the first coefficient of $G_{\alpha}$ is given by 
\begin{equation}\label{LUdefeqn}
\Lambda_{\alpha}:= \lim_{m\to\infty}\left( a_g(p)^{-m} a_{F_{\alpha}}\left(p^m\right)\right).
\end{equation}
We fix $\alpha_0\in \AM$ and note that
$$
\lim_{m\to\infty}\left( a_g(p)^{-m}p^{m(k-1)} E_g|U\left(p^m\right)\right)=E_g,
$$
so that for $\alpha=\alpha_0+\gamma$ with $\gamma\in \Qp$ one obtains that 
$$
G_{\alpha} = \left( \Lambda_{\alpha_0} - \gamma\right) E_g.
$$
Since $E_g$ is not a $p$-adic modular form by Lemma \eqref{uniquelemma} (2), $G_{\alpha}$ is a $p$-adic modular form if and only if 
$$
\gamma = \Lambda_{\alpha_0}.
$$
Plugging in $\alpha_0=a_M(1)$ gives the value for $\alpha$ given in equation \eqref{alphadivNeqn}.  This concludes the proof of the theorem.
\end{proof}

\end{document}